\newcommand{\assign}{:=}
\newcommand{\tmop}[1]{\ensuremath{\operatorname{#1}}}
\newcommand{\tmstrong}[1]{\textbf{#1}}
\numberwithin{equation}{section}  
\newtheorem{corollary}{Corollary}
\newtheorem{definition}{Definition}
\newtheorem{proposition}{Proposition}
\newtheorem{theorem}{Theorem}
\newtheorem{lemma}{Lemma}
\newcommand{\XXint}[3]{{\setbox}0=\text{\ensuremath{#1 #2 #3 \int}}
{\vcenter{\text{\ensuremath{#2 #3}}}}{\kern}-.5{\tmwd}0}
\newcommand{\opn}[2]{\newcommand{\1}{\}} {\opn}{\Rm{Rm}} {\opn}{\Ric{Ric}}
{\opn}{\Rc{Rc}} {\opn}{\Scal{Sc}} {\opn}{\Tr{Tr}} {\opn}{\Trac{Tr}}
{\opn}detdet {\opn}{\diam{diam}} {\opn}{\dist{dist}} {\opn}{\Im}Im
{\opn}{\div}div {\opn}{\Ker{Ker}} {\opn}expexp {\opn}{\Vol{Vol}}
{\opn}{\exph{exph}} {\opn}{\Herm{Herm}} {\opn}{\End{End}} {\opn}{\Hess{Hess}}
{\opn}{\Vol{Vol}}}
\newcommand{\I}{\mathbb{I}}
\newcommand{\contract}{{\kern}-1.5pt{\vrule} width6.0pt height0.4pt depth0pt
{\vrule} width0.4pt height4.0pt depth0pt}
\newcommand{\retract}{{\kern}-1.5pt{\vrule} width0.4pt height4.0pt depth0pt
{\vrule} width6.0pt height0.4pt depth0pt}
\newcommand{\Openbox}{{\leavevmode} {\text{{\hfil}{\vrule}
width{\boxrulethickness} {\vbox} to{\Openboxwidth{{\advance}{\Openboxwidth}
-2{\boxrulethickness} {\hrule} height {\boxrulethickness}
width{\Openboxwidth}{\vfil} {\hrule} height{\boxrulethickness}}}{\vrule}
width{\boxrulethickness}{\hfil} }}}
\begin{document}

\title{The Soliton-K\"ahler-Ricci Flow over Fano Manifolds}\author{\\
{\tmstrong{NEFTON PALI}}}\maketitle

\begin{abstract}
  We introduce a flow of K\"ahler structures over Fano manifolds with
  formal limit at infinite time a K\"ahler-Ricci soliton. This flow correspond
  to a Perelman's modified backward K\"ahler-Ricci type flow that we call
  Soliton-K\"ahler-Ricci flow. It can be generated by the Soliton-Ricci flow.
  We assume that the Soliton-Ricci flow exists for all times and the
  Bakry-Emery-Ricci tensor preserve a positive uniform lower bound with respect
  to the evolving metric. In this case we show that the corresponding
  Soliton-K\"ahler-Ricci flow converges exponentially fast to a K\"ahler-Ricci
  soliton.
\end{abstract}

\section{Introduction}

This paper is the continuation of the work \cite{Pal2} in the K\"ahler setting. 

The notion of K\"ahler-Ricci soliton (in short KRS) is a natural generalization 
of the notion of K\"ahler-Einstein metric.
A KRS over a Fano manifold $X$ is a K\"ahler metric in the class $2 \pi c_1
(X)$ such that the gradient of the default potential of the metric to be
K\"ahler-Einstein is holomorphic. The terminology is justified by the fact
that the pull back of the KRS metric via the flow of automorphisms generated
by this gradient provides a K\"ahler-Ricci flow.

We remind that the K\"ahler-Ricci flow (in short KRF) has been introduced by
H. Cao in \cite{Cao}. In the Fano case it exists for all positive times. Its
convergence in the classic sense implies the existence of a K\"ahler-Einstein
metric. The fact that not all Fano manifolds admit K\"ahler-Einstein metrics
implies the non convergence in the classic sense of the KRF in general.

Our approach for the construction of K\"ahler-Ricci solitons is based on the study of
a flow of K\"ahler structures $(X, J_t, g_t)_{t \ge 0}$ associated to any
normalized smooth volume form $\Omega > 0$ that we will call
$\Omega$-Soliton-K\"ahler-Ricci flow (in short $\Omega$-$\tmop{SKRF}$). (See
the definition \ref{def-SKRF} below.) Its formal limit is precisely the KRS
equation with corresponding volume form $\Omega$.

It turns out that this flow is generated by the backward KRF via the
diffeomorphisms flow corresponding to the gradient of functions satisfying
Perelman's backward heat equation \cite{Per} for the KRF. In particular our point
of view gives a new reason for considering Perelman's backward heat equation.

Using a result in \cite{Pal1} we can show that the $\Omega$-SKRF can be generated
by the $\Omega$-Soliton-Ricci flow (in short $\Omega$-$\tmop{SRF}$) introduced
in \cite{Pal2} via an ODE flow of complex structures of Lax type. (See corollary 2 below.)

Let $\cal M$ be the space of smooth Riemannian metrics. We have explained in
\cite{Pal2} that it make sense to consider the $\Omega$-$\tmop{SRF}$ for a special
set $\mathcal{S}_{_{^{\Omega, +}}}^K\subset
\cal M$ of initial data (see \cite{Pal2} for the
definition) that we call {\tmstrong{positive scattering data}} with center of
polarization $K$. 

In this paper we denote by $\mathcal{K}_{_{^J}}$ the set of
$J$-invariant K\"ahler metrics. We define the set of positive K\"ahler
scattering data as the set 
$$
\mathcal{S}^{K, +}_{_{^{\Omega, J}}} \;\,\assign\;\,
\mathcal{S}_{_{^{\Omega, +}}}^K \cap \mathcal{K}_{_{^J}}\,.
$$
With this
notations hold the following result which is a consequence of the convergence
result for the $\Omega$-$\tmop{SRF}$ obtained in \cite{Pal2}.

\begin{theorem}
  \label{Main-Teo}Let $(X, J_0)$ be a Fano manifold and assume there exist
  $g_0 \in \mathcal{S}^{K, +}_{_{^{\Omega, J_0}}}$, for some smooth volume
  form $\Omega > 0$ and some center of polarization $K$, such that the
  solution $(g_t)_t$ of the $\Omega$-$\tmop{SRF}$ with initial data $g_0$
  exists for all times and satisfies $\tmop{Ric}_{g_t} (\Omega) \geqslant
  \delta g_t$ for some uniform bound $\delta \in \mathbbm{R}_{> 0}$. 

Then the
  corresponding solution $(J_t, g_t)_{t \geqslant 0}$ of the
  $\Omega$-$\tmop{SKRF}$ converges exponentially fast with all its space
  derivatives to a $J_{\infty}$-invariant K\"ahler-Ricci soliton $g_{\infty} =
  \tmop{Ric}_{g_{\infty}} (\Omega)$.
  
  Furthermore assume there exists a positive K\"{a}hler scattering data $g_0 \in \mathcal{S}^{K, +}_{_{^{\Omega, J_0}}}$
  with $g_0 J_0 \in 2 \pi c_1 (X)$ such that the evolving complex structure $J_t$ stays
  constant along a solution $(J_t, g_t)_{t \in [0, T)}$ of the
  $\Omega$-$\tmop{SKRF}$ with initial data $(J_0, g_0)$. Then $g_0$ is a
  $J_0$-invariant K\"ahler-Ricci soliton and $g_t \equiv g_0$.
\end{theorem}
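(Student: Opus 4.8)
The plan is to deduce both assertions from the convergence theorem for the $\Omega$-$\tmop{SRF}$ proved in \cite{Pal2} together with the generation result recalled above (Corollary 2), which builds the $\Omega$-$\tmop{SKRF}$ $(J_t,g_t)$ out of the $\Omega$-$\tmop{SRF}$ metric $(g_t)$ by evolving the complex structures through a Lax-type ODE
\[
\dot J_t = [A_t, J_t], \qquad J_0 \text{ given},
\]
whose generator $A_t \in \operatorname{End}(TX)$ is assembled algebraically from the $\Omega$-$\tmop{SRF}$ data, i.e. from the soliton defect $\tmop{Ric}_{g_t}(\Omega)-g_t$ and its first derivatives, in the way that keeps $g_t$ compatible with $J_t$. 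Because the metric part $g_t$ is, by that generation result, exactly the $\Omega$-$\tmop{SRF}$ metric, the convergence claim for the $\Omega$-$\tmop{SKRF}$ splits into two pieces: (i) convergence of $g_t$, supplied directly by \cite{Pal2}, and (ii) convergence of $J_t$, which I would extract from the Lax ODE.

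For (i), the hypotheses — all-time existence together with the uniform lower bound $\tmop{Ric}_{g_t}(\Omega)\geqslant \delta g_t$, $\delta>0$ — are precisely those under which \cite{Pal2} gives that $g_t$ converges exponentially fast, with all its space derivatives, to a fixed point $g_\infty$ of the $\Omega$-$\tmop{SRF}$, namely $g_\infty = \tmop{Ric}_{g_\infty}(\Omega)$; in particular the defect $\tmop{Ric}_{g_t}(\Omega)-g_t$ and each of its covariant derivatives decays like $e^{-\lambda t}$. For (ii), writing the Lax flow as a conjugation $J_t = V_t\, J_0\, V_t^{-1}$ with $\dot V_t = A_t V_t$ and $V_0 = \mathbb{I}$, the exponential decay of $A_t$ (governed by that of the defect) yields $\int_0^{+\infty}\|A_t\|\,dt<+\infty$, so $V_t \to V_\infty$ exponentially and hence $J_t \to J_\infty := V_\infty J_0 V_\infty^{-1}$ exponentially; differentiating the ODE and bootstrapping against the $C^k$-decay of $g_t$ from (i) upgrades this to exponential convergence of $J_t$ with all space derivatives. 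Passing to the limit, integrability of $J_\infty$ and $J_\infty$-invariance and closedness of $g_\infty$ are closed conditions preserved under $C^\infty$-convergence, so $(J_\infty,g_\infty)$ is Kähler; combined with $g_\infty = \tmop{Ric}_{g_\infty}(\Omega)$ this exhibits $g_\infty$ as a $J_\infty$-invariant K\"ahler-Ricci soliton, the $\Omega$-soliton fixed-point equation being exactly the gradient soliton equation whose potential has $J_\infty$-holomorphic gradient in the Kähler setting.

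For the rigidity ("Furthermore") assertion I would argue as follows. If $J_t\equiv J_0$ on $[0,T)$, the Lax ODE forces $[A_t,J_0]=0$ for all $t$; since $A_t$ represents the metric speed and, by construction, it is its $J$-anti-invariant part that drives the rotation of $J$, this commutation says that the $J_0$-anti-invariant part of the $\Omega$-$\tmop{SRF}$ speed — equivalently of $\tmop{Ric}_{g_0}(\Omega)-g_0$ — vanishes. Together with $g_0 \in \mathcal{S}^{K, +}_{_{^{\Omega, J_0}}}$ and the polarization condition $g_0 J_0 \in 2\pi c_1(X)$ (which places $g_0$ in the anticanonical Kähler class of $J_0$), this pins $g_0$ down as a critical, hence stationary, configuration of the $\Omega$-$\tmop{SRF}$, i.e. a $J_0$-invariant K\"ahler-Ricci soliton; being a fixed point, it then evolves trivially and $g_t\equiv g_0$.

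The main obstacle I expect to be in part (ii) and the final identification: one must control the Lax generator $A_t$ uniformly in time, verify that its decay (including the higher derivative estimates needed to push $C^\infty$-convergence through the conjugation $V_t J_0 V_t^{-1}$) is genuinely inherited from the exponential decay of the soliton defect, and then confirm that the real fixed-point equation $g_\infty=\tmop{Ric}_{g_\infty}(\Omega)$, once transported to $(J_\infty,g_\infty)$, really produces a soliton potential with \emph{holomorphic} gradient rather than merely a gradient $\Omega$-soliton. By contrast the rigidity step is comparatively soft, resting only on the algebraic fact that a constant $J_t$ annihilates the $J_0$-anti-invariant part of the flow speed.
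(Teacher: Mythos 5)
Your convergence argument is essentially the paper's own: the metric part is delegated to the $\Omega$-SRF convergence theorem of \cite{Pal2}, and the complex structures are controlled through the Lax ODE $2\,\dot{J}_t=[J_t,\dot{g}_t^{\ast}]$, whose right-hand side decays exponentially. Whether one integrates $\dot{J}_t$ directly (as the paper does, using $|J_t|^2_{g_t}=2n$ and, for higher derivatives, the identity $2\,\nabla^p_{g_t}\dot{J}_t=[J_t,\nabla^p_{g_t}\dot{g}_t^{\ast}]$, available because $\nabla_{g_t}J_t\equiv 0$ kills all terms involving derivatives of $J_t$) or conjugates $J_t=V_tJ_0V_t^{-1}$ with $\dot{V}_t=A_tV_t$ as you propose is a cosmetic difference. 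Your closing worry about the limit is also unfounded: once $(J_\infty,g_\infty)$ is K\"ahler and $g_\infty=\tmop{Ric}_{g_\infty}(\Omega)$, the decomposition (\ref{cx-dec-Ric}) gives $\bar{\partial}_{_{T_{X,J_\infty}}}\nabla_{g_\infty}\log (dV_{g_\infty}/\Omega)=0$ for free, since $g_\infty$ and $-\tmop{Ric}_{_{J_\infty}}(\Omega)J_\infty$ are $J_\infty$-invariant while the remaining term in the decomposition is $J_\infty$-anti-invariant.

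The genuine gap is in the rigidity part, which you call ``comparatively soft, resting only on the algebraic fact that a constant $J_t$ annihilates the $J_0$-anti-invariant part of the flow speed''. That algebraic fact yields only $\bar{\partial}_{_{T_{X,J_0}}}\nabla_{g_0}\log(dV_{g_0}/\Omega)=0$, i.e.\ that the Ricci potential of $g_0$ has holomorphic gradient. This is strictly weaker than the KRS equation: by (\ref{cx-dec-Ric}) one still needs the $J_0$-invariant part of the defect to vanish, i.e.\ $\omega_0=\tmop{Ric}_{_{J_0}}(\Omega)$, and no commutator identity produces this. (Concretely: on a Fano manifold with no nonzero holomorphic vector fields, for generic $\Omega$ the Calabi--Yau metric solving $\omega_0^n=\Omega$ has constant potential, hence trivially holomorphic gradient, yet is not a KRS for $\Omega$.) The step you compress into ``this pins $g_0$ down as a critical, hence stationary, configuration'' is exactly the content of the paper's Lemma 2 (the KRS characterization of K\"ahler pre-scattering metrics in the class $2\pi c_1(X)$), and it is analytic, not algebraic: one writes $\omega_0=\tmop{Ric}_{_{J_0}}(\Omega)+i\,\partial_{_{J_0}}\bar{\partial}_{_{J_0}}u$ (here the hypothesis $g_0J_0\in 2\pi c_1(X)$ enters), observes that the pre-scattering condition $g_0\in\mathcal{S}_{\Omega,J_0}$ together with $\bar{\partial}\nabla f_0=0$ forces $\bar{\partial}_{_{T_{X,J_0}}}\tmop{Ric}^{\ast}_{_{J_0}}(\Omega)_{g_0}=0$, hence $\bar{\partial}_{_{T_{X,J_0}}}\partial^{g_0}_{_{T_{X,J_0}}}\nabla_{g_0}u=0$, and then a K\"ahler identity plus integration by parts shows $u$ is constant. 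Your proposal never engages the scattering hypothesis in this substantive way, so the rigidity claim is unproved as written. Once the KRS identity is in hand, the conclusion $g_t\equiv g_0$ does follow; the paper obtains it from the explicit solution formula of Lemma \ref{baby-SKRF} rather than from an appeal to uniqueness of solutions of the flow.
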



\section{The Soliton-K\"ahler-Ricci Flow}

Let $\Omega > 0$ be a smooth volume form over an oriented Riemannian manifold
$(X, g)$. We remind that the $\Omega$-Bakry-Emery-Ricci tensor of $g$ is
defined by the formula
$$
\tmop{Ric}_g (\Omega)\;\; : =\;\; \tmop{Ric} (g)\;\, +\;\, \nabla_g\, d \log
   \frac{dV_g}{\Omega} \;. 
$$
A Riemannian metric $g$ is called a $\Omega$-Shrinking Ricci soliton (in short
$\Omega$-ShRS) if $g = \tmop{Ric}_g (\Omega)$. Let now $(X, J)$ be a complex
manifold. A $J$-invariant K\"ahler metric $g$ is called a $J$-K\"ahler-Ricci
soliton (in short $J$-KRS) if there exist a smooth volume form $\Omega > 0$
such that $g = \tmop{Ric}_g (\Omega)$.

The discussion below will show that if a compact K\"ahler manifold admit a
K\"ahler-Ricci soliton $g$ then this manifold is Fano and the choice of
$\Omega$ corresponding to $g$ is unique up to a normalizing constant.

We remind first that any smooth volume form $\Omega > 0$ over a complex
manifold $(X, J)$ of complex dimension $n$ induces a hermitian metric
$h_{\Omega}$ over the canonical bundle $K_{_{X, J}} \assign \Lambda_{_J}^{n,
0} T^{\ast}_{_X} $ given by the formula
$$
 h_{\Omega} (\alpha, \beta) \;\;\assign\;\; \frac{n! \,i^{n^2} \alpha \wedge
   \bar{\beta} }{\Omega} \;. 
$$
By abuse of notations we will denote by $\Omega^{- 1}$ the metric $h_{\Omega}$. The dual metric $h_{\Omega}^{\ast}$ on the anti-canonical bundle $K^{-
1}_{_{X, J}} = \Lambda_{_J}^{n, 0} T_{_X}$ is given by the formula
$$
 h_{\Omega}^{\ast} (\xi, \eta) \;\;=\;\; (- i)^{n^2} \Omega \left( \xi,
   \bar{\eta} \right) / n! \;. 
$$
Abusing notations again, we denote by $\Omega$ the dual metric
$h_{\Omega}^{\ast}$. We define the $\Omega$-Ricci form
$$
\tmop{Ric}_{_J} \left( \Omega \right) \;\;\assign\;\; i\,\mathcal{C}_{\Omega}  \big(
   K^{- 1}_{_{X, J}} \big) \;\;=\;\; -\;\, i\,\mathcal{C}_{\Omega^{- 1}} \big( K_{_{X,
   J}} \big)\;, 
$$
where $\mathcal{C}_h (L)$ denotes the Chern curvature of a hermitian line
bundle. In particular we observe the identity $\tmop{Ric}_{_J} (\omega) =
\tmop{Ric}_{_J} (\omega^n) .$ We remind also that for any $J$-invariant
K\"ahler metric $g$ the associated symplectic form $\omega \assign g J$
satisfies the elementary identity
\begin{eqnarray}
  \tmop{Ric} (g) \;\; = \;\; - \;\, \tmop{Ric}_{_J} (\omega) J
  \; .\label{cx-Ric-ide}
\end{eqnarray}
Moreover for all twice differentiable function $f$ hold the identity
\begin{eqnarray*}
  \nabla_g \,d\,f \;\; = \;\; - \;\, \big(i\, \partial_{_J} \bar{\partial}_{_J}
  \,f \big) J \;\,+\;\, g\, \bar{\partial}_{_{T_{X, J}}} \nabla_g \,f\; .
\end{eqnarray*}
(See the decomposition formula (\ref{cx-dec-Hess}) in the appendix.) We infer
the decomposition identity
\begin{equation}
  \label{cx-dec-Ric} \tmop{Ric}_g (\Omega) \;\;=\;\; -\;\, \tmop{Ric}_{_J} (\Omega) J \;\,+\;\, g\,
  \bar{\partial}_{_{T_{X, J}}} \nabla_g \log \frac{dV_g}{\Omega} \;.
\end{equation}
Thus a $J$-invariant K\"ahler metric $g$ is a $J$-KRS iff there exist a smooth
volume form $\Omega > 0$ such that
\begin{eqnarray*}
 \left\{ \begin{array}{l}
\displaystyle{g \;\,=\;\, - \tmop{Ric}_{_J} (\Omega) J }\;,\\
     \\
\displaystyle{\bar{\partial}_{_{T_{X, J}}} \nabla_g \log \frac{\hspace{0.25em} d
     V_g}{\Omega} \;\,=\;\, 0 }\; .
   \end{array} \right. 
\end{eqnarray*}
The first equation of this system implies that $(X, J)$ must be a Fano
variety. We can translate the notion of K\"ahler-Ricci soliton in symplectic
therms. In fact let $(X, J_0)$ be a Fano manifold of complex dimension $n$,
let $c_1 : = c_1 (X, [J_0])$, where $[J_0]$ is the co-boundary class of the complex structure $J_0$
and set
$$
 \mathcal{J}^+_{X, J_0} \;\;: =\;\; \left\{ J \in [J_0] \hspace{0.25em} \mid
   \hspace{0.25em} N_J = 0\,,\; \exists\, \omega\, \in\, \mathcal{K}^{2 \pi c_1}_{_{^J}}
   \right\}\;, 
$$
where $N_J$ denotes the Nijenhuis tensor and
$$
\mathcal{K}^{2 \pi c_1}_{_{^J}} \;\;: =\;\; \Big\{ \omega \in 2 \pi c_1 \mid
   \omega \;=\; J^{\ast} \omega J\,,\;\, -\;\omega J \;>\; 0 \Big\}\;, 
$$
is the set of $J$-invariant K\"ahler forms $\omega \in 2 \pi c_1$. It is clear
that for any complex structure $J \in \mathcal{J}^+_{X, J_0}$ and any form
$\omega \in \mathcal{K}^{2 \pi c_1}_{_{^J}}$ there exist a unique smooth
volume form $\Omega > 0$ with $\int_X \Omega = (2 \pi c_1)^n$ such that $\omega
= \tmop{Ric}_{_J} (\Omega)$.

This induces an inverse functional $\tmop{Ric}_{_J}^{- 1}$ such that $\Omega
= \tmop{Ric}_{_J}^{- 1} (\omega)$. With this notation we infer that a
$J$-invariant form $\omega \in 2 \pi c_1$ is the symplectic form associated to a $J$-KRS if and only if $0 < g :
= - \omega J$ and
$$
 \bar{\partial}_{_{T_{X, J}}} \nabla_g \log
   \frac{\omega^n}{\tmop{Ric}_{_J}^{- 1} (\omega)} \;\;= \;\;0\;. 
$$
In equivalent volume therms we say that a smooth volume form $\Omega > 0$ with
$\int_X \Omega = (2 \pi c_1)^n$ is a $J$-Soliton-Volume-Form (in short
$J$-SVF) if
\begin{eqnarray*}
 \left\{ \begin{array}{l}
\displaystyle{0 \;\,<\;\, g \;\,: =\;\, - \;\tmop{Ric}_{_J} (\Omega) J}\;,\\
     \\
\displaystyle{\bar{\partial}_{_{T_{X, J}}} \nabla_g \log \frac{\tmop{Ric}_{_J}
     (\Omega)^n}{\Omega} \;\,=\;\, 0 }\;.
   \end{array} \right. 
\end{eqnarray*}
We deduce a natural bijection between the sets $\{g \mid J \text{- \tmop{KRS}}
\}$ and $\{\Omega \mid J \text{- \tmop{SVF}} \}$. We define also the set of
Soliton-Volume-Forms over $(X,J_0)$ as
$$
 \mathcal{SV}_{X, J_0} \;\;: =\;\; \left\{ \Omega \;>\; 0 \mid \int_X \Omega \;=\; (2 \pi
   c_1)^n\;,\;\, \exists \, J \in \mathcal{J}^+_{X, J_0} \;:\; \Omega\;
   \text{\tmop{is} a} \;J \text{- SVF} \right\} \;.
$$
We would like to investigate under which conditions $\mathcal{SV}_{X, J_0} 
\neq \emptyset$. For this purpose it seem natural to consider the
following flow of K\"ahler structures.

\begin{definition}
\label{def-SKRF}
{\tmstrong{$($The $\Omega$-Soliton-K\"ahler-Ricci flow$)$}}.
Let $(X, J_0)$ be
a Fano manifold and let $\Omega > 0$ be a smooth volume form with $\int_X
\Omega = (2 \pi c_1)^n$. A $\Omega$-Soliton-K\"ahler-Ricci flow $($in short
$\Omega$-$\tmop{SKRF})$ is a flow of K\"ahler structures $(X, J_t,
\omega_t)_{t \geqslant 0}$ which is solution of the evolution system
\begin{equation}
  \label{VSKRF-eq}  \left\{ \begin{array}{l}
\displaystyle{    \frac{d}{dt} \hspace{0.25em} \omega_t\; \;=\;\; \tmop{Ric}_{_{J_t}} (\Omega) \;\,-\;\,
    \omega_t }\;,\\
    \\
\displaystyle{    \frac{d}{dt} \hspace{0.25em} J_t \;\;= \;\; J_t \hspace{0.25em}
    \bar{\partial}_{_{T_{X, J_t}}} \nabla_{g_t} \log
    \frac{\omega_t^n}{\Omega}}\;,
  \end{array} \right.
\end{equation}
where $g_t : = - \,\omega_t J_t$.
\end{definition}
The $\Omega$-SKRF equation/system (\ref{VSKRF-eq}) can be written in an equivalent way as
\begin{equation}
  \label{SKRF-eq}  \left\{ \begin{array}{l}
\displaystyle{\frac{d}{dt} \hspace{0.25em} \omega_t \;\,-\;\, i\, \partial_{_{J_t}}
    \bar{\partial}_{_{J_t}} \,f_t\;\; =\;\; \tmop{Ric}_{_{J_t}} (\omega_t) \;-\;
    \omega_t  }\;,\\
    \\
\displaystyle{    \frac{d}{dt}\hspace{0.25em} J_t \;\;=\;\; J_t\, \bar{\partial}_{_{T_{X, J_t}}} \nabla_{g_t}
    f_t}\;,\\
    \\
 \displaystyle{   e^{- f_t} \omega^n_t \;\;=\;\; \Omega }\;.
  \end{array} \right.
\end{equation}
We observe also that (\ref{VSKRF-eq}) or (\ref{SKRF-eq}) are equivalent to the system
\begin{equation}
  \label{VSKRF-ev}  \left\{ \begin{array}{l}
\displaystyle{\frac{d}{dt}\; \omega_t \;\;=\;\; \tmop{Ric}_{_{J_t}} (\Omega) \;\,-\;\, \omega_t}\;,\\
    \\
\displaystyle{    J_t \;\;: =\;\; (\Phi_t^{- 1})^{\ast} J_0 \;\;: = \;\;\Big[ \left( d \Phi_t \cdot J_0
    \right) \circ \Phi_t^{- 1} \Big] \cdot d \Phi_t^{- 1}}\;,\\
    \\
\displaystyle{\frac{d}{dt} \;\Phi_t \;\;=\;\; -\;\, \left( \frac{1}{2}\, \nabla_{g_t} \log
    \frac{\omega_t^n}{\Omega} \right) \circ \Phi_t}\;,\\
    \\
    \Phi_0 \;\;=\;\; \text{Id}_X\; .
  \end{array} \right.
\end{equation}
In fact lemma \ref{t-derLIE} combined with lemma \ref{Lie-CXst} in the appendix implies
\begin{eqnarray*}
  \frac{d}{dt} \; (\Phi_t^{\ast} J_t) & = & \Phi_t^{\ast} \left(
  \frac{d}{dt}\; J_t \;\,-\;\, \frac{1}{2}\, L_{\nabla_{g_t} f_t} J_t \right)\\
  &  & \\
  & = & \Phi_t^{\ast} \left( \frac{d}{dt} \; J_t \;\,-\;\, J_t
  \hspace{0.25em} \bar{\partial}_{_{T_{X, J_t}}} \nabla_{g_t} f_t \right)
  \;\,=\;\, 0\;.
\end{eqnarray*}
We define now $\hat{\omega}_t : = \Phi_t^{\ast} \omega_t$ , $\hat{g}_t : =
\Phi_t^{\ast} g_t = - \,\hat{\omega}_t  J_0$ and we observe that the evolving family
$$
(J_0 , \hat{\omega}_t)_t \;\; =\;\;\Phi_t^{\ast} (J_t , \omega_t)_t \;,
$$
represents a backward K\"ahler-Ricci flow over $X$. In fact the K\"ahler condition
$$
\nabla_{\hat{g}_t} J_0 \;\;=\;\;\Phi_t^{\ast}
   \big(\nabla_{g_t} J_t\big) \;\;=\;\; 0\;, 
$$
hold and
\begin{eqnarray*}
  \frac{d}{dt} \hspace{0.25em} \hat{\omega}_t & = & \Phi_t^{\ast} \left(
  \frac{d}{dt} \hspace{0.25em} \omega_t \;\,-\;\, \frac{1}{2}  \hspace{0.25em}
  L_{\nabla_{g_t} f_t } \hspace{0.25em} \omega_t \right)\\
  &  & \\
  & = & \Phi_t^{\ast} \Big( \tmop{Ric}_{_{J_t}} (\omega_t) \;\,-\;\, \omega_t
  \Big)\\
  &  & \\
  & = & \tmop{Ric}_{_{J_0}} ( \hat{\omega}_t) \hspace{0.75em} -
  \hspace{0.75em} \hat{\omega}_t \hspace{0.25em},
\end{eqnarray*}
by the formula (\ref{cx-Lie}) in the appendix. We observe that the volume form preserving
condition $e^{- f_t} \omega^n_t = \Omega$ in the equation (\ref{SKRF-eq}) is
equivalent to the heat equation
\begin{equation}
  \label{SKRF-gag-pot-EV} 2\, \frac{d}{dt} \hspace{0.25em} f_t \;\;=\;\;
  \tmop{Tr}_{\omega_t}  \frac{d}{dt} \hspace{0.25em} \omega_t
  \;\; =\;\; -\;\, \Delta_{g_t} f_t \;\,+\;\,  \tmop{Scal} (g_t)
  \;\,-\;\,  2 \,n\;,
\end{equation}
with initial data $f_0 : = \log \frac{\omega_0^n}{\Omega}$. 
(In this paper we adopt the sign
convention $\Delta_g \assign - \tmop{div}_g \nabla_g$.)
In its turn this
is equivalent to the heat equation
\begin{equation}
  \label{gaug-pot-EVOL} 2\, \frac{d}{dt} \hspace{0.25em}  \hat{f}_t
  \;\;=\;\; -\;\, \Delta_{\hat{g}_t}  \hat{f}_t
  \;\,-\;\, | \nabla_{\hat{g}_t}  \hat{f}_t
  |_{\hat{g}_t}^2 \;\,+\;\, \tmop{Scal} ( \hat{g}_t)
  \;\,-\;\,  2 \,n\;,
\end{equation}
with same initial data $\hat{f}_0 : = \log \frac{\omega_0^n}{\Omega}$. In fact
let $\hat{f}_t : = f_t \circ \Phi_t$ and observe that the evolution equation
of $\Phi_t$ in (\ref{VSKRF-ev}) implies
\begin{eqnarray*}
  \frac{d}{dt} \hspace{0.25em}  \hat{f}_t & = & \left( \frac{d}{dt}
  \hspace{0.25em} f_t \right) \circ \Phi_t \;\,+ \;\,
  \left\langle (\nabla_{g_t} f_t) \circ \Phi_t \,,
   \frac{d}{dt} \hspace{0.25em} \Phi_t \right\rangle_{g_t \circ
  \Phi_t}\\
  &  & \\
  & = & \left( \frac{d}{dt} \hspace{0.25em} f_t \;\,-\;\,
   \frac{1}{2}  \hspace{0.25em} | \nabla_{g_t} f_t |^2_{g_t}
  \right) \circ \Phi_t \; .
\end{eqnarray*}
We observe also that the derivation identity
$$
 0 \;\; = \;\; \frac{d}{dt}  \left( \Phi_t^{- 1} \circ
   \Phi_t \right) 
\;\;=\;\;  \left( \frac{d}{dt} \hspace{0.25em}
   \Phi_t^{- 1} \right) \circ \Phi_t 
\;\,+ \;\, d\,
   \Phi_t^{- 1} \cdot \frac{d}{dt} \hspace{0.25em} \Phi_t\;, 
$$
combined with the evolution equation of $\Phi_t$ in the system (\ref{VSKRF-ev}) implies
$$
2 \hspace{0.25em} d\, \Phi_t \cdot \left( \frac{d}{dt}  \hspace{0.25em}
   \Phi_t^{- 1} \right) \circ \Phi_t \;\; = \;\;
   \nabla_{g_t} f_t \circ \Phi_t \;\; = \;\; d\, \Phi_t
   \cdot \nabla_{\hat{g}_t}  \hat{f}_t \;. 
$$
We infer the evolution formula
\begin{equation}
  \label{diff-flw}  \frac{d}{dt} \hspace{0.25em} \Phi_t^{- 1} \;\;
  = \;\; \frac{1}{2} \left( \nabla_{\hat{g}_t}  \hat{f}_t \right)
  \circ \Phi_t^{- 1} \;.
\end{equation}
In conclusion we deduce that the $\Omega$-SKRF $(J_t \hspace{0.25em},
\hspace{0.25em} \omega_t)_{t \geqslant 0}$ is equivalent to the system of independent equations
\begin{eqnarray*}
 \left\{ \begin{array}{l}
\displaystyle{     \frac{d}{dt} \;\hat{\omega}_t \;\;=\;\; \tmop{Ric}_{_{J_0}} (
     \hat{\omega}_t) \;\,-\;\, \hat{\omega}_t }\;,\\
     \\
\displaystyle{      2 \, \frac{d}{dt}\;  \hat{f}_t \;\;=\;\; -\;\, \Delta_{\hat{g}_t} 
     \hat{f}_t \;\,-\;\, | \nabla_{\hat{g}_t}  \hat{f}_t |_{\hat{g}_t}^2 \;\,+\;\, \tmop{Scal}
     ( \hat{g}_t) \;\,-\;\, 2\, n}\;, \\
     \\
     e^{- \hat{f}_0}  \hat{\omega}^n_0 \;\;=\;\; \Omega\;,
   \end{array} \right. 
\end{eqnarray*}
by means of the gradient flow of diffeomorphisms (\ref{diff-flw}).
\\
\\
{\tmstrong{Notation.}} Let $(X, g, J)$ be a K\"ahler manifold with symplectic
form $\omega \assign g J$ and consider $v \in S_{_{\mathbbm{R}}}^2
T^{\ast}_X,$ $\alpha \in \Lambda_{_{\mathbbm{R}}}^2 T^{\ast}_X .$ We define
the endomorphisms $v^{\ast}_g \assign g^{- 1} v$ and $\alpha_g^{\ast} \assign
\omega^{- 1} \alpha .$ For example we will define the endomorphisms
$$
\tmop{Ric}^{\ast}_g (\Omega) \;\;\assign\;\; g^{- 1} \tmop{Ric}_g (\Omega)\;,
$$ 
and
$$
\tmop{Ric}^{\ast}_{_J} (\Omega)_g \;\;: =\;\; \omega^{- 1} \tmop{Ric}_{_J} (\Omega)\;.
$$
With this notations formula (\ref{cx-dec-Ric}) implies the decomposition
identity
\begin{equation}
  \label{dec-end-Ric} \tmop{Ric}^{\ast}_{g_{_{}}} (\Omega) \;\;=\;\;
  \tmop{Ric}^{\ast}_{_J} (\Omega)_g \;\,+\;\, \bar{\partial}_{_{T_{X, J}}}
  \nabla_g \log \frac{d V_g}{\Omega}\; .
\end{equation}

\section{The Riemannian nature of the Soliton-K\"ahler-Ricci Flow}{}

The goal of this section is to show that the K\"ahler structure along the SKRF
comes for free from the SRF introduced in \cite{Pal2} by means of a Lax type ODE
for the complex structures which preserves the K\"ahler condition. For this
purpose let $(J_t, g_t)_{t \geqslant 0}$ be a $\Omega$-SKRF. Time deriving
the identity $g_t = - \,\omega_t J_t$ we obtain
\begin{eqnarray*}
  \frac{d}{dt} \hspace{0.25em} g_t & = & - \;\,\frac{d}{dt} 
  \hspace{0.25em} \omega_t \,J_t \;\,- \;\, \omega_t \,
  \frac{d}{dt} \hspace{0.25em} J_t\\
  &  & \\
  & = & - \;\, \tmop{Ric}_{_{J_t}} (\Omega) J_t \;\, +\;\,
  \omega_t\, J_t \;\, - \;\, \omega_t \,J_t
  \, \bar{\partial}_{_{T_{X, J_t}}} \nabla_{g_t} f_t\\
  &  & \\
  & = & - \;\, \tmop{Ric}_{_{J_t}} (\Omega) J_t \;\,+\;\,
 g_t \, \bar{\partial}_{_{T_{X, J_t}}} \nabla_{g_t} f_t
  \;\,- \;\, g_t\\
  &  & \\
  & = & \tmop{Ric}_{g_t} (\Omega) \;\, - \;\, g_t\;,
\end{eqnarray*}
thanks to the complex decomposition (\ref{cx-dec-Ric}). We have obtained the
evolving system of K\"ahler structures $(J_t, g_t)_{t \geqslant 0}$,
\begin{equation}
  \label{Rm-SKRF-eq}  \left\{ \begin{array}{l}
\displaystyle{    \frac{d}{dt}  \hspace{0.25em} g_t \;\; = \;\;
    \tmop{Ric}_{g_t} (\Omega) \;\, - \;\, g_t}
    \;,\\
    \\
\displaystyle{   2 \hspace{0.25em} \frac{d}{dt} \hspace{0.25em} J_t \;\; =\;\;
    \hspace{0.75em} \hspace{0.25em} J_t \nabla^2_{g_t} \log
    \frac{dV_{g_t}}{\Omega} \;\,-\;\,\nabla^2_{g_t}
    \log \frac{dV_{g_t}}{\Omega} \hspace{0.25em} J_t }\;,
  \end{array} \right.
\end{equation}
which is equivalent to (\ref{VSKRF-eq}). (The second equation in the system follows from the
fact that in the K\"ahler case the Chern connection coincides with the
Levi-Civita connection.) We observe that the identity (\ref{cx-Ric-ide}) implies that the Ricci endomorphism
$$
\tmop{Ric}^{\ast}(g) \;\;=\;\;
  \tmop{Ric}^{\ast}_{_J} (\omega)_g\;,
$$
is $J$-linear.
Thus the system (\ref{Rm-SKRF-eq}) is equivalent to
the evolution of the couple $(J_t, g_t)$ under the system
\begin{equation}
  \label{Rm-SKRF-ode}  \left\{ \begin{array}{l}
\displaystyle{    \dot{g}_t \;\;=\;\; \tmop{Ric}_{g_t} (\Omega)
    \;\,-\;\, g_t }\;,\\
    \\
\displaystyle{    2 \hspace{0.25em} \dot{J}_t \;\;=\;\;
     J_t \, \dot{g}_t^{\ast} \;\,-\;\,
    \dot{g}_t^{\ast} J_t }\;,\\
    \\
\displaystyle{    J^2_t \;\;=\;\; - \;\,\I_{T_X} \;,
    \hspace{1em} (J_t)^T_{g_t} \;\;=\;\; - \;\,J_t
    \;, \hspace{1em} \nabla_{g_t} J_t \;\;=\;\;
     0 }\;,
  \end{array} \right.
\end{equation}
where for notation simplicity we set $\dot{g}_t \assign \frac{d}{dt} g_t$ and
$\dot{J}_t \assign \frac{d}{dt} J_t$. Moreover $(J_t)^T_{g_t}$ denotes the transpose of $J_t$ with respect to $g_t$. We remind now an elementary fact (see
lemma 4 in \cite{Pal1}).

\begin{lemma}
  Let $(g_t)_{t \geqslant 0}$ be a smooth family of Riemannian metrics and let
  $(J_t)_{t \geqslant 0}$ be a family of endomorphisms of $T_X$ solution of
  the $\tmop{ODE}$
$$
2\, \dot{J}_t \;\;=\;\; J_t\,  \dot{g}_t^{\ast} \;\,-\;\,\dot{g}_t^{\ast} J_t\;, 
$$
  with initial conditions $J^2_0 \;=\; -\; \I_{T_X}$
  and $(J_0)^T_{g_0} \;=\; -\; J_0$. Then this
  conditions are preserved in time i.e. $J^2_t \;=\;-\; \I_{T_X}$ and $(J_t)^T_{g_t} \;=\;-\; J_t$ for all $t \geqslant 0$. 
\end{lemma}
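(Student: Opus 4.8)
The plan is to prove that the two pointwise algebraic conditions on $J_t$ are preserved by exhibiting each as the solution of a linear ODE whose initial value is the desired constant. The key observation is that the given evolution $2\dot J_t = J_t\dot g_t^\ast - \dot g_t^\ast J_t$ expresses $\dot J_t$ as one-half the commutator $[J_t,\dot g_t^\ast]$, and commutator-type evolutions are exactly the ones that preserve algebraic identities of this kind. I would treat the two conditions separately.

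First I would handle the condition $J_t^2 = -\I_{T_X}$. Setting $A_t := J_t^2 + \I_{T_X}$, I compute $\frac{d}{dt}J_t^2 = \dot J_t J_t + J_t\dot J_t$ and substitute the ODE to get $2\frac{d}{dt}J_t^2 = (J_t\dot g_t^\ast - \dot g_t^\ast J_t)J_t + J_t(J_t\dot g_t^\ast - \dot g_t^\ast J_t) = J_t\dot g_t^\ast J_t - \dot g_t^\ast J_t^2 + J_t^2\dot g_t^\ast - J_t\dot g_t^\ast J_t = J_t^2\dot g_t^\ast - \dot g_t^\ast J_t^2$. Thus $2\dot A_t = A_t\dot g_t^\ast - \dot g_t^\ast A_t$, a linear homogeneous ODE in $A_t$ with $A_0 = 0$; by uniqueness $A_t \equiv 0$, so $J_t^2 = -\I_{T_X}$ for all $t$.

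Next I would handle the skew-symmetry condition $(J_t)^T_{g_t} = -J_t$. The subtlety here is that the transpose is taken with respect to the evolving metric $g_t$, so I cannot simply differentiate; I must track how $(J_t)^T_{g_t}$ depends on both $J_t$ and $g_t$. Concretely, using $g_t\big((J_t)^T_{g_t}u,v\big) = g_t(u,J_tv)$ and the fact that $\dot g_t$ is symmetric (so that $\dot g_t^\ast = g_t^{-1}\dot g_t$ satisfies $(\dot g_t^\ast)^T_{g_t} = \dot g_t^\ast$), I would derive the evolution of $B_t := (J_t)^T_{g_t} + J_t$. Differentiating the defining relation $g_t\big((J_t)^T_{g_t}u,v\big) = g_t(u,J_tv)$ and collecting the $\dot g_t$ and $\dot J_t$ contributions yields an ODE of the same commutator form $2\dot B_t = B_t\dot g_t^\ast - \dot g_t^\ast B_t$ (the transpose operation interacts with the commutator precisely so as to reproduce the sign, because $\dot g_t^\ast$ is self-adjoint), again with $B_0 = 0$, whence $B_t \equiv 0$.

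The main obstacle I anticipate is the second computation: the transpose $(\cdot)^T_{g_t}$ with respect to a time-dependent metric is where all the bookkeeping lives, and one must carefully use both that $\dot g_t$ is $g_t$-symmetric and that the commutator structure is antisymmetric under $g_t$-transpose to see that $B_t$ closes into its own homogeneous linear equation. Once both $A_t$ and $B_t$ are shown to satisfy $2\dot X_t = X_t\dot g_t^\ast - \dot g_t^\ast X_t$ with vanishing initial data, the conclusion is immediate from uniqueness of solutions to linear ODEs in the finite-dimensional bundle $\End(T_X)$ fiberwise. I would state the uniqueness appeal explicitly, since it is the mechanism that converts ``the defect satisfies a homogeneous equation'' into ``the defect vanishes.''
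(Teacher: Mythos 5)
Your argument is correct. Note that the paper itself gives no proof of this lemma: it simply cites Lemma~4 of \cite{Pal1}, so there is nothing internal to compare against; your commutator argument is the natural (and surely the intended) one. The only step you leave as a claim rather than a computation is that $(J_t)^T_{g_t}$ closes into the same commutator equation, and it does: writing $(J_t)^T_{g_t}=g_t^{-1}J_t'g_t$ (with $J_t'$ the dual map) and differentiating gives
$$
\frac{d}{dt}\,(J_t)^T_{g_t}\;=\;(\dot J_t)^T_{g_t}\;+\;(J_t)^T_{g_t}\,\dot g_t^{\ast}\;-\;\dot g_t^{\ast}\,(J_t)^T_{g_t}\,,
$$
while taking the $g_t$-transpose of the ODE (which reverses products and fixes $\dot g_t^{\ast}$, since $\dot g_t$ is symmetric) gives $2(\dot J_t)^T_{g_t}=\dot g_t^{\ast}(J_t)^T_{g_t}-(J_t)^T_{g_t}\dot g_t^{\ast}$; combining, $2\frac{d}{dt}(J_t)^T_{g_t}=(J_t)^T_{g_t}\dot g_t^{\ast}-\dot g_t^{\ast}(J_t)^T_{g_t}$, so $B_t=(J_t)^T_{g_t}+J_t$ indeed satisfies the homogeneous linear equation and vanishes identically by fiberwise uniqueness, exactly as you say.
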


We deduce that the system (\ref{Rm-SKRF-ode}) is equivalent to the system
\begin{equation}
  \label{Rm-SKRF-ode-Kah}  \left\{ \begin{array}{l}
    \dot{g}_t \;\;=\;\; \tmop{Ric}_{g_t} (\Omega)
    \;\,-\;\, g_t \;,\\
    \\
    2 \hspace{0.25em} \dot{J}_t \;\;=\;\;
    \hspace{0.25em} J_t \, \dot{g}_t^{\ast} \;\,-\;\,
     \dot{g}_t^{\ast}  J_t \;,\\
    \\
    \nabla_{g_t} J_t \;\;=\;\; 0 \;,
  \end{array} \right.
\end{equation}
with K\"ahler initial data $(J_0, g_0) .$ We show now how we can get rid of
the last equation. We define the vector space
\begin{eqnarray*}
  \mathbbm{F}_g \;\;\assign \;\; \Big\{ v \in C^{\infty} \left( X,
  S_{_{\mathbbm{R}}}^2 T^{\ast}_X \right) \mid \hspace{0.25em} \nabla_{_{T_X,
  g}} v_g^{\ast} \;\,=\;\, 0 \Big\}\;,
\end{eqnarray*}
where $\nabla_{_{T_X, g}}$ denotes the covariant exterior derivative acting on
$T_X$-valued differential forms and we remind the following key result
obtained in \cite{Pal1}.

\begin{proposition}
  \label{F-Kah}Let $(g_t)_{t \geqslant 0}$ be a smooth family of Riemannian
  metrics such that $\dot{g}_t \in \mathbbm{F}_{g_t}$ and let $(J_t)_{t
  \geqslant 0}$ be a family of endomorphisms of $T_X$ solution of the
  $\tmop{ODE}$
$$
\dot{J}_t \;\;=\;\; J_t  \,\dot{g}_t^{\ast} \;\,-\;\,
     \dot{g}_t^{\ast} J_t\;, 
$$
  with K\"ahler initial data $(J_0, g_0)$. Then $(J_t, g_t)_{t \geqslant 0}$
  is a smooth family of K\"ahler structures.
\end{proposition}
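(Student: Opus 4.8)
The plan is to show that the only K\"ahler condition not yet established, namely $\nabla_{g_t} J_t = 0$, propagates from $t = 0$. By the previous lemma the algebraic relations $J_t^2 = -\,\I_{T_X}$ and $(J_t)^T_{g_t} = -\,J_t$ are preserved, so $(J_t, g_t)_{t \geqslant 0}$ is already a smooth family of almost-Hermitian structures. Since an almost-Hermitian structure whose complex operator is parallel is automatically integrable (its Nijenhuis tensor is built from $\nabla J$) and has parallel, hence closed, fundamental form, it is K\"ahler; thus it suffices to prove that $N_t \assign \nabla_{g_t} J_t$ vanishes for all $t$, knowing $N_0 = 0$. First I would set $A_t \assign \dot{g}_t^{\ast} \in \tmop{End}(T_X)$, which is $g_t$-symmetric, and observe that the hypothesis $\dot{g}_t \in \mathbbm{F}_{g_t}$ is exactly the vanishing of the covariant exterior derivative $\nabla_{T_X, g_t} A_t = 0$, i.e. $\nabla_i (A_t)^k_j = \nabla_j (A_t)^k_i$.

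The first key step is to rewrite the variation of the Levi-Civita connection. In general $\dot{\Gamma}^k_{ij} = \tfrac{1}{2}\, g_t^{kl}\big(\nabla_i \dot{g}_{jl} + \nabla_j \dot{g}_{il} - \nabla_l \dot{g}_{ij}\big)$. The symmetry $\nabla_i (A_t)^k_j = \nabla_j (A_t)^k_i$ together with the symmetry of $\dot{g}_t$ makes the fully lowered tensor $\nabla_i \dot{g}_{jl}$ totally symmetric in its three indices (the transpositions of the first–third and second–third slots generate all of $S_3$). Consequently the three terms above coincide, and I obtain the clean identity $\dot{\Gamma}^k_{ij} = \tfrac{1}{2}\, \nabla_i (A_t)^k_j$; written as an $\tmop{End}(T_X)$-valued $1$-form this reads $\dot{\Gamma}_X = \tfrac{1}{2}\, \nabla_X A_t$.

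The second step is to differentiate $N_t$ in time, using $\tfrac{d}{dt}(\nabla_X J_t) = \nabla_X \dot{J}_t + [\dot{\Gamma}_X, J_t]$. Inserting the Lax-type evolution $\dot{J}_t = \tfrac{1}{2}(J_t A_t - A_t J_t)$ of the preceding lemma and expanding $\nabla_X \dot{J}_t$ by Leibniz gives $\nabla_X \dot{J}_t = \tfrac{1}{2}[\nabla_X J_t, A_t] + \tfrac{1}{2}[J_t, \nabla_X A_t]$, while $[\dot{\Gamma}_X, J_t] = \tfrac{1}{2}[\nabla_X A_t, J_t] = -\tfrac{1}{2}[J_t, \nabla_X A_t]$. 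The two terms carrying a derivative of $A_t$ cancel exactly, leaving the pointwise linear homogeneous equation $\dot{N}_t = \tfrac{1}{2}[N_t, A_t]$, in which no spatial derivative of $N_t$ appears on the right. Since $A_t$ is smooth and $N_0 = \nabla_{g_0} J_0 = 0$, uniqueness for linear ODEs (applied fibrewise at each point of $X$) forces $N_t \equiv 0$, which is the sought K\"ahler condition.

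The main obstacle is precisely this cancellation: a priori $\dot{N}_t$ carries an inhomogeneous term $\sim [J_t, \nabla A_t]$, independent of $N_t$, which would drive the structure away from K\"ahler. What makes the argument close is that the $\mathbbm{F}_{g_t}$-hypothesis does double duty, simultaneously reducing $\dot{\Gamma}$ to $\tfrac{1}{2}\nabla A_t$ and making the connection-variation contribution $[\dot{\Gamma}_X, J_t]$ the exact opposite of the derivative term produced by the Lax evolution of $J_t$. I would therefore devote most of the write-up to verifying the total symmetry of $\nabla \dot{g}_t$ and to matching these two derivative terms carefully; once the homogeneous ODE $\dot{N}_t = \tfrac{1}{2}[N_t, A_t]$ is in hand, the conclusion $N_t \equiv 0$ is immediate.
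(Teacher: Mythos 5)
Your argument is correct, and it is worth noting up front that the paper itself gives no proof of Proposition \ref{F-Kah}: it is quoted as a ``key result obtained in [Pal1]'', so there is no in-text proof to compare against. Your route --- reduce to the propagation of $\nabla_{g_t}J_t=0$, use the total symmetry of $\nabla\dot g_t$ forced by $\dot g_t\in\mathbbm{F}_{g_t}$ to collapse the variation of the Levi-Civita connection to $\dot\Gamma_X=\tfrac12\nabla_XA_t$, and then observe that this term exactly cancels the $[J_t,\nabla_XA_t]$ term coming from the Lax evolution, leaving the fibrewise homogeneous linear ODE $\dot N_t=\tfrac12[N_t,A_t]$ --- is the natural proof and, as far as one can reconstruct, the same mechanism as in the cited reference. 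All the individual steps check out: the transpositions $(12)$ and $(23)$ do generate $S_3$, so $\nabla_i\dot g_{jl}$ is totally symmetric; the variation formula $\tfrac{d}{dt}(\nabla_iJ)=\nabla_i\dot J+[\dot\Gamma_i,J]$ is correct; and $\nabla J=0$ together with the algebraically preserved conditions $J^2=-\I_{T_X}$, $J^T_g=-J$ does yield a K\"ahler structure. One point you should make explicit: the cancellation only works for the normalization $2\dot J_t=J_t\dot g_t^{\ast}-\dot g_t^{\ast}J_t$ (the one used in Lemma 1, Corollary \ref{Split-SKRF} and the system (\ref{Rm-SKRF-ode})), whereas the Proposition as printed omits the factor $2$; with $\dot J_t=[J_t,\dot g_t^{\ast}]$ literally, one is left with a residual term $\tfrac12[J_t,\nabla_XA_t]$ and the conclusion fails. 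You silently adopted the correct convention, but since the statement you were asked to prove carries the (evidently typographical) unnormalized ODE, the discrepancy deserves a sentence.
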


In particular using lemma 1 in \cite{Pal2} we infer the following corollary which
provides a simple way to generate K\"ahler structures.

\begin{corollary}
  Let $(J_t, g_t)_{t \geqslant 0} \subset C^{\infty} (X,
  \tmop{End}_{_{\mathbbm{R}}} (T_X)) \times \mathcal{M}$ be the solution of
  the ODE
\begin{equation}
\left\{ \begin{array}{l}
\displaystyle{\frac{d}{d t} \hspace{0.25em} \dot{g}_t^{\ast} \;\;=\;\; 0}
       \;,\\
       \\
\displaystyle{2 \hspace{0.25em} \frac{d}{d t}\hspace{0.25em} J_t \;\;=\;\;
       \hspace{0.25em} J_t\,  \dot{g}_t^{\ast} \;\,-\;\,
       \dot{g}_t^{\ast} J_t }\;,
     \end{array} \right. 
\end{equation}
  with $(J_0, g_0)$ K\"ahler data and with $\nabla_{_{T_X, g_0}} (
  \dot{g}_0^{\ast})^p = 0$ for all $p \in \mathbbm{Z}_{> 0}$. Then $(J_t,
  g_t)_{t \geqslant 0}$ is a smooth family of K\"ahler structures.
\end{corollary}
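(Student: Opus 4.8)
The plan is to reduce the statement to Proposition \ref{F-Kah}, whose hypothesis is precisely that $\dot{g}_t \in \mathbbm{F}_{g_t}$ along the whole family, i.e. that $\nabla_{_{T_X, g_t}} \dot{g}_t^{\ast} = 0$ for every $t$. The first equation of the system, $\frac{d}{dt}\dot{g}_t^{\ast} = 0$, integrates at once to $\dot{g}_t^{\ast} \equiv A$, where $A \assign \dot{g}_0^{\ast}$ is a fixed endomorphism. Since $\dot{g}_0 = g_0 A$ is symmetric, $A$ is $g_0$-self-adjoint, and the relation $\dot{g}_t = g_t A$ is a linear ODE whose solution is $g_t = g_0\, e^{tA}$; self-adjointness and positivity of $e^{tA}$ guarantee that $g_t$ remains a symmetric positive metric for all $t$. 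Thus the first equation merely records that $\dot{g}_t^{\ast}$ is the constant endomorphism $A$.

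The heart of the argument is to propagate the flatness hypothesis from $t=0$ to all times, which is the content of lemma 1 of \cite{Pal2}; I would reprove it directly as follows. The standing assumption $\nabla_{_{T_X, g_0}} (\dot{g}_0^{\ast})^p = 0$ for all $p \in \mathbbm{Z}_{> 0}$ (already a consequence of the case $p=1$ via the Leibniz rule) annihilates the exponential series termwise, giving $\nabla_{g_0} e^{tA} = 0$; that is, the self-adjoint factor $h_t \assign e^{tA}$ in $g_t = g_0\, h_t$ is parallel for the Levi-Civita connection of $g_0$. Since $\nabla_{g_0} g_0 = 0$, this yields $\nabla_{g_0} g_t = 0$, so by the classical formula for the difference of two Levi-Civita connections (built from $\nabla_{g_0} g_t$) the difference tensor vanishes and $\nabla_{g_t} = \nabla_{g_0}$ for every $t$. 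In particular $\nabla_{_{T_X, g_t}} A = \nabla_{_{T_X, g_0}} A = 0$, i.e. $\dot{g}_t \in \mathbbm{F}_{g_t}$ for all $t \geqslant 0$. This is the step I expect to be the main obstacle: a priori the evolving metric carries an evolving connection, and it is exactly the all-powers parallelism hypothesis that freezes the connection, which is what lets the K\"ahler data ride unchanged along the flow.

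With $\dot{g}_t \in \mathbbm{F}_{g_t}$ secured for all $t$ and $(J_0, g_0)$ K\"ahler, the conclusion follows from Proposition \ref{F-Kah}, whose Lax-type ODE coincides with the present one up to the constant factor $2$; this factor is harmless, since it only rescales the speed of the flow and affects neither the condition $\nabla_{_{T_X, g_t}} \dot{g}_t^{\ast} = 0$ (equivalently $\nabla_{_{T_X, g_t}}(\tfrac12 A) = 0$) nor the pointwise algebra. Concretely, the preceding Lemma, stated for exactly this ODE, preserves $J_t^2 = -\,\I_{T_X}$ and $(J_t)^T_{g_t} = -\,J_t$, while the parallel condition is immediate from the fixed connection: setting $P_t \assign \nabla_{g_0} J_t$ and using $\nabla_{g_0} A = 0$ together with the $t$-independence of $\nabla_{g_0}$, one finds $2\dot{P}_t = P_t A - A P_t$ with $P_0 = \nabla_{g_0} J_0 = 0$, whence $P_t \equiv 0$ and $\nabla_{g_t} J_t = \nabla_{g_0} J_t = 0$. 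Integrability of $J_t$ then follows from the vanishing of the Nijenhuis tensor for a parallel $J_t$, and so $(J_t, g_t)_{t \geqslant 0}$ is a smooth family of K\"ahler structures.
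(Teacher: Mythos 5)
Your overall strategy is the right one and matches the paper's: integrate the first equation to $\dot{g}_t^{\ast}\equiv A\assign\dot{g}_0^{\ast}$, hence $g_t=g_0\,e^{tA}$, propagate the hypothesis to obtain $\dot{g}_t\in\mathbbm{F}_{g_t}$ for all $t$, and conclude by Proposition \ref{F-Kah} (the factor $2$ is indeed a harmless reparametrization). The paper delegates the propagation step to lemma 1 of \cite{Pal2}; you chose to reprove it, and your reproof rests on a misreading of the hypothesis. The operator $\nabla_{_{T_X,g_0}}$ is the \emph{covariant exterior derivative} on $T_X$-valued forms, so $\nabla_{_{T_X,g_0}}(\dot{g}_0^{\ast})^p=0$ asserts only the vanishing of the antisymmetrization $(\nabla_{g_0,\xi}A^p)\eta-(\nabla_{g_0,\eta}A^p)\xi$ (a Codazzi-type condition), not $\nabla_{g_0}(A^p)=0$. (That this is the intended meaning is confirmed in Section 4, where $\partial^g_{_{T_{X,J}}}\tmop{Ric}^{\ast}_{_J}(\Omega)_g=0$ is identified with $\partial_{_J}\tmop{Ric}_{_J}(\Omega)=0$, an exterior-derivative condition.) Consequently you may not conclude $\nabla_{g_0}e^{tA}=0$, nor $\nabla_{g_0}g_t=0$, nor that $\nabla_{g_t}=\nabla_{g_0}$; the connection is genuinely not frozen in general. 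For instance, on a Riemannian product $S^1\times Y$ take $A=a(\theta)P$ with $P$ the orthogonal projection onto the circle factor: then $A$ is $g_0$-symmetric, $\nabla_{_{T_X,g_0}}(A^p)=0$ for every $p$ because $(\nabla^0_{\xi}A^p)\eta=(a^p)'\,\xi^{\theta}\eta^{\theta}\,\partial_{\theta}$ is symmetric in $\xi,\eta$, yet $\nabla_{g_0}A\neq 0$ and $\nabla_{g_t}\neq\nabla_{g_0}$. The same misreading undermines your parenthetical claim that the case $p=1$ implies all $p$ by Leibniz (Leibniz applies to $\nabla_{g_0}$, not to its antisymmetrization; were the claim true the paper would not assume all powers), and it invalidates your closing argument that $\nabla_{g_t}J_t=\nabla_{g_0}J_t$.

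The statement you need is still true, but the correct mechanism is different. Writing $h_t\assign e^{tA}$ and $\nabla^0\assign\nabla_{g_0}$, the Codazzi condition for $h_t$ (which does follow termwise from the hypothesis) collapses the Koszul formula for the difference tensor to $\Gamma_t(\xi,\eta)=\tfrac12\,h_t^{-1}(\nabla^0_{\xi}h_t)\eta$; then $(\nabla_{g_t,\xi}A)\eta-(\nabla_{g_t,\eta}A)\xi$ acquires the extra term $\Gamma_t(\xi,A\eta)-\Gamma_t(\eta,A\xi)$, whose vanishing amounts to $(\nabla^0_{\xi}A^p)(A\eta)=(\nabla^0_{\eta}A^p)(A\xi)$, and this is exactly where the hypothesis for the \emph{consecutive} powers $p$ and $p+1$ enters, via $(\nabla^0_{\xi}A^{p+1})\eta=(\nabla^0_{\xi}A^p)(A\eta)+A^p(\nabla^0_{\xi}A)\eta$. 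That computation (the content of lemma 1 of \cite{Pal2}) is the step your write-up replaces with an unjustified full-parallelism claim; likewise the preservation of $\nabla_{g_t}J_t=0$ is the nontrivial content of Proposition \ref{F-Kah} and cannot be recovered from an ODE for $\nabla_{g_0}J_t$ once the connection is allowed to move.
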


We remind also the definitions introduced in \cite{Pal2}. We define the set of
pre-scattering data
\begin{eqnarray*}
  \mathcal{S}_{_{^{\Omega}}} \;\; := \;\; \Big\{ g \in \mathcal{M} \mid \hspace{0.25em}
  \nabla_{_{T_X, g}} \tmop{Ric}^{\ast}_g (\Omega) \;\;=\;\; 0 \Big\}\; .
\end{eqnarray*}
\begin{definition}
  {\tmstrong{$($The $\Omega$-Soliton-Ricci flow$)$}}. Let $\Omega > 0$ be a
  smooth volume form over an oriented Riemannian manifold $X$. A
  $\Omega$-Soliton-Ricci Flow $($in short $\Omega$-$\tmop{SRF})$ is a Flow of
  Riemannian metrics $(g_t)_{t \geqslant 0} \subset
  \mathcal{S}_{_{^{\Omega}}}$ solution of the evolution equation $ \dot{g}_t =
  \tmop{Ric}_{g_t} (\Omega) - g_t$.
\end{definition}

From Proposition \ref{F-Kah} we deduce the following fact which shows the
Riemannian nature of the $\Omega$-SKRF. Namely that the $\Omega$-SKRF can be generated
by the $\Omega$-SRF.

\begin{corollary}
  \label{Split-SKRF}Let $\Omega > 0$ be a smooth volume form over a K\"ahler
  manifold $(X, J_0)$ and let $(g_t)_{t \geqslant 0}$ be a solution of the
  $\Omega$-$\tmop{SRF}$ with K\"ahler initial data $(J_0, g_0) .$ Then the
  family $(J_t, g_t)_{t \geqslant 0}$ with $(J_t)_{t \geqslant 0}$ the
  solution of the $\tmop{ODE}$
$$
2\, \dot{J}_t \;\;=\;\; J_t \, \dot{g}_t^{\ast} \;\,-\;\,
     \dot{g}_t^{\ast} J_t\;, 
$$
  is a solution of the $\Omega$-$\tmop{SKRF}$ equation.
\end{corollary}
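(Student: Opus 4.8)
The first equation of the system (\ref{Rm-SKRF-eq}) is, by definition, the $\Omega$-$\tmop{SRF}$ equation $\dot{g}_t = \tmop{Ric}_{g_t}(\Omega) - g_t$, so it holds by hypothesis; likewise the defining $\tmop{ODE}$ $2\,\dot{J}_t = J_t\,\dot{g}_t^{\ast} - \dot{g}_t^{\ast}\,J_t$ is precisely the second equation of (\ref{Rm-SKRF-ode}). Thus the plan is to prove that $(J_t, g_t)_{t\geqslant 0}$ is in fact a family of \emph{K\"ahler} structures, for then it satisfies the full system (\ref{Rm-SKRF-ode-Kah}), which is equivalent to (\ref{Rm-SKRF-ode}) and hence to the $\Omega$-$\tmop{SKRF}$ (\ref{Rm-SKRF-eq}). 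The entire geometric content will be outsourced to Proposition \ref{F-Kah}, so the only thing left to verify is its hypothesis $\dot{g}_t \in \mathbbm{F}_{g_t}$.

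First I would exploit membership in the pre-scattering set. Since $(g_t)_t$ solves the $\Omega$-$\tmop{SRF}$ we have $(g_t)_t \subset \mathcal{S}_{_{^{\Omega}}}$, that is $\nabla_{_{T_X, g_t}}\tmop{Ric}^{\ast}_{g_t}(\Omega) = 0$ for every $t$. Taking the $g_t$-associated endomorphism of the evolution equation gives
$$
\dot{g}_t^{\ast} \;\;=\;\; g_t^{-1}\big(\tmop{Ric}_{g_t}(\Omega) - g_t\big) \;\;=\;\; \tmop{Ric}^{\ast}_{g_t}(\Omega) - \I_{T_X}\;,
$$
and since the identity endomorphism is parallel we obtain $\nabla_{_{T_X, g_t}}\dot{g}_t^{\ast} = \nabla_{_{T_X, g_t}}\tmop{Ric}^{\ast}_{g_t}(\Omega) = 0$. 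Hence $\dot{g}_t \in \mathbbm{F}_{g_t}$ for all $t$.

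With this in hand I would apply Proposition \ref{F-Kah} to the family $(g_t)_t$ together with the solution $(J_t)_t$ of the Lax-type $\tmop{ODE}$ and the K\"ahler initial datum $(J_0, g_0)$: it yields that $(J_t, g_t)_{t\geqslant 0}$ is a smooth family of K\"ahler structures, in particular $\nabla_{g_t} J_t = 0$ for all $t$. Combined with the two evolution equations already checked, this shows that $(J_t, g_t)_t$ solves (\ref{Rm-SKRF-ode-Kah}). Finally, the decomposition identity (\ref{cx-dec-Ric}), equivalently (\ref{dec-end-Ric}), is exactly what converts the Hessian form of the $J$-equation in (\ref{Rm-SKRF-eq}) into the commutator $J_t\,\dot{g}_t^{\ast} - \dot{g}_t^{\ast}\,J_t$, the summands $\tmop{Ric}^{\ast}_{_{J_t}}(\Omega)_{g_t}$ and $\I_{T_X}$ being $J_t$-linear and therefore dropping out of the bracket. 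Consequently $(J_t, g_t)_t$ solves the $\Omega$-$\tmop{SKRF}$ (\ref{Rm-SKRF-eq}), as claimed.

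The step I expect to be the real crux is the invocation of Proposition \ref{F-Kah}: the preservation of the \emph{full} K\"ahler condition $\nabla_{g_t} J_t = 0$ along the flow, rather than merely of $J_t^2 = -\I_{T_X}$ and of $g_t$-compatibility $(J_t)^T_{g_t} = -J_t$ (which follow from the elementary preservation lemma recalled above), is the genuinely nontrivial ingredient, and it is precisely there that the parallelism $\nabla_{_{T_X, g_t}}\dot{g}_t^{\ast} = 0$ coming from the scattering condition is used. A secondary point worth checking is the matching of the normalization between the $\tmop{ODE}$ in Proposition \ref{F-Kah} and the factor $2$ in the $\tmop{SKRF}$ Lax equation; this is the normalization forced by the compatibility condition $(J_t)^T_{g_t} = -J_t$ and is the one already fixed by the elementary lemma.
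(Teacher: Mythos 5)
Your proposal is correct and follows exactly the route the paper intends (the paper gives no written proof, merely citing Proposition \ref{F-Kah}): you verify the hypothesis $\dot{g}_t \in \mathbbm{F}_{g_t}$ from the pre-scattering condition $\nabla_{_{T_X, g_t}}\tmop{Ric}^{\ast}_{g_t}(\Omega)=0$ built into the definition of the $\Omega$-$\tmop{SRF}$, invoke the proposition to get the K\"ahler condition $\nabla_{g_t}J_t=0$, and then run the chain of equivalences (\ref{Rm-SKRF-ode-Kah}) $\Leftrightarrow$ (\ref{Rm-SKRF-ode}) $\Leftrightarrow$ (\ref{Rm-SKRF-eq}) $\Leftrightarrow$ (\ref{VSKRF-eq}) already established in Section 3, with the $J_t$-linear summands dropping out of the commutator. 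Your closing remark about the normalization is also well taken: the factor $2$ is indeed the one forced by preservation of $(J_t)^T_{g_t}=-J_t$, and its absence in the displayed ODE of Proposition \ref{F-Kah} appears to be a typo in the paper.
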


\section{The set of K\"ahler pre-scattering data}

We define the set of K\"ahler pre-scattering data as
$\mathcal{S}_{_{^{\Omega, J}}} \assign \mathcal{S}_{_{^{\Omega}}} \cap
\mathcal{K}_{_{^J}}$. Using the complex decomposition formula
(\ref{dec-end-Ric}) we infer the equality
\begin{eqnarray*}
  \mathcal{S}_{_{^{\Omega, J}}} \;\; = \;\; \Big\{ g \in \mathcal{K}_{_{^J}} \mid
  \hspace{0.25em} \partial^g_{_{T_{X, J}}} \bar{\partial}_{_{T_{X, J}}}
  \nabla_g \log \frac{d V_g}{\Omega} \;\;=\;\; -\;\, \bar{\partial}_{_{T_{X, J}}}
  \tmop{Ric}^{\ast}_{_J} (\Omega)_g \Big\}\; .
\end{eqnarray*}
In fact the identity $d \tmop{Ric}_{_J} (\Omega) = 0$ is
equivalent to the identity $\partial_{_J} \tmop{Ric}_{_J} (\Omega) = 0,$ which
in its turn is equivalent to the identity
\begin{eqnarray*}
  \partial_{_{T_{X, J}}}^g \tmop{Ric}^{\ast}_{_J} (\Omega)_g \;\; = \;\; 0\; .
\end{eqnarray*}
We observe now the following quite elementary facts.
\begin{lemma}
  Let $(X, J)$ be a Fano manifold and let $g \in \mathcal{S}_{_{^{\Omega,
  J}}}$ such that $\omega \assign g J \in 2 \pi c_1 (X)$. Then $g$ is a
  $J$-invariant KRS iff $\omega = \tmop{Ric}_{_J} (\Omega)$, iff $\tmop{Ric}_g
  (\Omega)$ is $J$-invariant.
\end{lemma}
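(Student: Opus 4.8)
The plan is to reduce the threefold equivalence to the single equivalence of the two ``middle'' conditions. Write $f \assign \log\frac{dV_g}{\Omega}$ and $H \assign \tmop{Ric}^{\ast}_{_J}(\Omega)_g = \omega^{-1}\tmop{Ric}_{_J}(\Omega)$. First, the complex decomposition (\ref{cx-dec-Ric}) exhibits $\tmop{Ric}_g(\Omega)$ as the sum of the $J$-invariant term $-\tmop{Ric}_{_J}(\Omega)J$ and the $J$-anti-invariant term $g\,\bar\partial_{T_{X,J}}\nabla_g f$; since the splitting of a symmetric $2$-tensor into its $J$-invariant and $J$-anti-invariant parts is unique, the condition ``$\tmop{Ric}_g(\Omega)$ is $J$-invariant'' is equivalent to $\bar\partial_{T_{X,J}}\nabla_g f = 0$. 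On the other hand, by the K\"ahler–Ricci soliton system recorded above, $g$ is a $J$-KRS iff $g = -\tmop{Ric}_{_J}(\Omega)J$, i.e. $\omega = \tmop{Ric}_{_J}(\Omega)$, \emph{and} $\bar\partial_{T_{X,J}}\nabla_g f = 0$ hold simultaneously. Hence ``$g$ is a KRS'' is equivalent to the conjunction of ``$\omega = \tmop{Ric}_{_J}(\Omega)$'' and ``$\tmop{Ric}_g(\Omega)$ is $J$-invariant'', and it suffices to prove that these two latter conditions are equivalent.

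For the implication $\omega = \tmop{Ric}_{_J}(\Omega) \Rightarrow \bar\partial_{T_{X,J}}\nabla_g f = 0$ I would use that then $H = \I_{T_X}$, so $\bar\partial_{T_{X,J}}H = 0$, and the defining identity of $\mathcal{S}_{_{^{\Omega, J}}}$ collapses to $\partial^g_{T_{X,J}}\bar\partial_{T_{X,J}}\nabla_g f = 0$. The analytic core is to upgrade this to $\bar\partial_{T_{X,J}}\nabla_g f = 0$ by an integration by parts over the compact $X$: setting $A \assign \bar\partial_{T_{X,J}}\nabla_g f$, the vanishing of $\partial^g_{T_{X,J}}A$ means that every $(1,0)$-covariant derivative of $A$ is zero (the mixed-type components $dz^k\wedge d\bar z^j$ are linearly independent, so no antisymmetrization intervenes), and in $\int_X |A|^2\,dV_g$ one may transfer a $\bar\partial$-derivative onto the conjugate factor, where it meets precisely a conjugated $(1,0)$-derivative of $A$, which vanishes; thus $A = 0$. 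This step is boundary-free and curvature-free, so no positivity of $\tmop{Ric}_g(\Omega)$ enters here.

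For the converse $\bar\partial_{T_{X,J}}\nabla_g f = 0 \Rightarrow \omega = \tmop{Ric}_{_J}(\Omega)$ the same defining identity of $\mathcal{S}_{_{^{\Omega, J}}}$ now forces $\bar\partial_{T_{X,J}}H = 0$, so $H$ is a holomorphic endomorphism of $(T_X, J)$; being $\omega^{-1}$ of the real $(1,1)$-form $\tmop{Ric}_{_J}(\Omega)$, it is moreover $g$-self-adjoint. On a compact K\"ahler manifold such an $H$ is parallel: the functions $\tmop{Tr}(H^p)$ are holomorphic, hence constant, so the real eigenvalues of $H$ are constant, and the associated spectral projectors, being real polynomials in $H$, are holomorphic and self-adjoint, therefore parallel by compatibility of the Chern connection with the metric. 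Consequently $\tmop{Ric}_{_J}(\Omega) = \omega H$ is parallel, hence harmonic; since $\omega$ is parallel as well and $[\tmop{Ric}_{_J}(\Omega)] = 2\pi c_1(X) = [\omega]$ by the hypothesis $\omega \in 2\pi c_1(X)$, the parallel and therefore harmonic exact form $\tmop{Ric}_{_J}(\Omega) - \omega$ must vanish, giving $\omega = \tmop{Ric}_{_J}(\Omega)$.

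I expect the main obstacle to be the first integration by parts, i.e. extracting $\bar\partial_{T_{X,J}}\nabla_g f = 0$ from $\partial^g_{T_{X,J}}\bar\partial_{T_{X,J}}\nabla_g f = 0$: the bookkeeping of the covariant derivatives by bidegree, and the verification that the integration by parts closes without leaving residual curvature terms, is the delicate point. The converse rests instead on the more standard rigidity of holomorphic self-adjoint endomorphisms on compact K\"ahler manifolds, which I would invoke as known.
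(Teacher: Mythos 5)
Your proposal is correct, and its logical skeleton (KRS $\Leftrightarrow$ the conjunction of the two other conditions, so it suffices to prove those two equivalent) is the same as the paper's. The first implication is essentially the paper's argument: the paper passes from $\partial^g_{_{T_{X,J}}}\bar{\partial}_{_{T_{X,J}}}\nabla_g f=0$ to $\bar{\partial}^{\ast_g}_{_{T_{X,J}}}\bar{\partial}_{_{T_{X,J}}}\nabla_g f=0$ by the standard K\"ahler commutation identity and then integrates by parts, which is exactly the coordinate computation you describe (transferring the $\bar\partial$ onto the conjugate factor, with no curvature appearing); your worry about this step is unfounded, it closes cleanly. The converse is where you genuinely diverge. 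The paper stays within the potential-theoretic framework: it writes $\omega=\tmop{Ric}_{_J}(\Omega)+i\partial_{_J}\bar{\partial}_{_J}u$, converts $\bar{\partial}_{_{T_{X,J}}}H=0$ into $\bar{\partial}_{_{T_{X,J}}}\partial^g_{_{T_{X,J}}}\nabla_g u=0$, and repeats the K\"ahler-identity-plus-integration-by-parts step to get $i\partial_{_J}\bar{\partial}_{_J}u\equiv 0$, hence $u$ constant. You instead prove that the holomorphic $g$-self-adjoint endomorphism $H$ is parallel and conclude by Hodge theory (parallel $\Rightarrow$ harmonic, exact harmonic $\Rightarrow$ zero). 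Your route is valid and buys the stronger intermediate statement that $\tmop{Ric}_{_J}(\Omega)$ is parallel, at the cost of importing the rigidity of holomorphic Hermitian endomorphisms; incidentally, your spectral-projector detour is unnecessary, since for the Chern connection $\nabla^{1,0}H=(\bar{\partial}H^{\ast})^{\ast}=(\bar{\partial}H)^{\ast}=0$ directly when $H=H^{\ast}$. The paper's route is more uniform, being the mirror image of the first half with $\partial^g$ and $\bar{\partial}$ interchanged.
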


\begin{proof}
  In the case $\tmop{Ric}^{\ast}_{_J} (\Omega)_g =\mathbbm{I}_{T_X}$ the condition
  $g \in \mathcal{S}_{_{^{\Omega, J}}}$ is equivalent to the condition
  \begin{equation}
    \label{dbar-clos}  \bar{\partial}_{_{T_{X, J}}} \nabla_g \log \frac{d
    V_g}{\Omega} \;\;=\;\; 0\;.
  \end{equation}
  (i.e the $J$-invariance of $\tmop{Ric}_g (\Omega)$ and thus that $g$ is a
  $J$-invariant KRS.) In fact in this case
  \begin{eqnarray*}
    \partial^g_{_{T_{X, J}}} \bar{\partial}_{_{T_{X, J}}} \nabla_g \log
    \frac{d V_g}{\Omega} \;\; = \;\; 0\;,
  \end{eqnarray*}
  which by a standard K\"ahler identity implies
  \begin{eqnarray*}
    \bar{\partial}^{\ast_g}_{_{T_{X, J}}} \bar{\partial}_{_{T_{X,
    J}}} \nabla_g \log \frac{d V_g}{\Omega} \;\;= \;\; 0\; .
  \end{eqnarray*}
  Thus an integration by parts yields the required identity (\ref{dbar-clos}).
  
  On the other hand if we assume that $\tmop{Ric}_g (\Omega)$ is
  $J$-invariant i.e we assume (\ref{dbar-clos}) then the condition $g \in
  \mathcal{S}_{_{^{\Omega, J}}}$ is equivalent to the condition
  \begin{eqnarray*}
    \bar{\partial}_{_{T_{X, J}}} \tmop{Ric}^{\ast}_{_J} (\Omega)_g \;\; = \;\;
    0 \;.
  \end{eqnarray*}
  For cohomology reasons hold the identity
  \begin{eqnarray*}
    \omega \;\; = \;\; \tmop{Ric}_{_J} (\Omega) \;\,+\;\, i\, \partial_{_J}
    \bar{\partial}_{_J} u\;,
  \end{eqnarray*}
  for some $u \in C^{\infty} (X, \mathbbm{R})$. We deduce the equalities
  \begin{eqnarray*}
    0 \;\; = \;\; \bar{\partial}_{_{T_{X, J}}}  \left( i \partial_{_J}
    \bar{\partial}_{_J} u \right)_g^{\ast} \;\;=\;\; \bar{\partial}_{_{T_{X,
    J}}} \partial^g_{_{T_{X, J}}} \nabla_g \,u \;.
  \end{eqnarray*}
  Using again a standard K\"ahler identity we infer
  \begin{eqnarray*}
    \partial^{\ast_g}_{_{T_{X, J}}} \partial^g_{_{T_{X, J}}} \nabla_g \,u \;\;=\;\;
    0 \;.
  \end{eqnarray*}
An integration by parts yields the conclusion $i \,\partial_{_J}
  \bar{\partial}_{_J} u \equiv 0$, i.e $u \equiv 0$, which implies the
  required KRS equation.
\end{proof}

\begin{lemma}
  \label{baby-SKRF}Let $(X, J)$ be a K\"ahler manifold and let $(g_t)_{t \in
  [0, T)}$ be a smooth family of $J$-invariant K\"ahler metrics solution of
  the equation
  \begin{eqnarray*}
    \dot{g}_t \;\; = \;\; \tmop{Ric}_{g_t} (\Omega) \;\,-\;\, g_t\; .
  \end{eqnarray*}
Then this family is given by the formula 
$$
g_t \;\;=\;\; -\;\, \tmop{Ric}_{_J} (\Omega) J
  \;\,+\;\, \big(g_0 \;\,+\;\, \tmop{Ric}_{_J} (\Omega) J\big)\, e^{- t}\,,
$$
with $J$-invariant K\"ahler
  initial data $g_0$ solution of the equation
  \begin{eqnarray*}
    \bar{\partial}_{_{T_{X, J}}} \nabla_{g_0} \log \frac{d
    V_{g_0}}{\Omega} \;\; = \;\; 0\; .
  \end{eqnarray*}
\end{lemma}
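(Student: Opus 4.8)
The plan is to exploit the fact that the complex structure $J$ is fixed along the flow, so that the evolution equation collapses to a linear ODE. First I would record that, since each $g_t$ is $J$-invariant and $J$ does not depend on $t$, differentiating the invariance relation $g_t(JX,JY)=g_t(X,Y)$ in time (with $X,Y$ fixed) shows that $\dot g_t$ is $J$-invariant as well. Because $\dot g_t=\tmop{Ric}_{g_t}(\Omega)-g_t$ and $g_t$ is $J$-invariant, it follows that $\tmop{Ric}_{g_t}(\Omega)$ is $J$-invariant for every $t$.

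Next I would feed this into the complex decomposition (\ref{cx-dec-Ric}), which writes $\tmop{Ric}_{g_t}(\Omega)$ as the sum of the $J$-invariant tensor $-\tmop{Ric}_{_J}(\Omega)J$ and the term $g_t\bar{\partial}_{_{T_{X,J}}}\nabla_{g_t}\log\frac{dV_{g_t}}{\Omega}$. By the Hessian decomposition formula recalled in the excerpt, this last term is precisely the $J$-anti-invariant component of the Hessian of $\log\frac{dV_{g_t}}{\Omega}$, hence a $J$-anti-invariant symmetric tensor. Since I have just shown that both $\tmop{Ric}_{g_t}(\Omega)$ and $-\tmop{Ric}_{_J}(\Omega)J$ are $J$-invariant, their difference $g_t\bar{\partial}_{_{T_{X,J}}}\nabla_{g_t}\log\frac{dV_{g_t}}{\Omega}$ is simultaneously $J$-invariant and $J$-anti-invariant, so it must vanish identically. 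As $g_t$ is nondegenerate this yields $\bar{\partial}_{_{T_{X,J}}}\nabla_{g_t}\log\frac{dV_{g_t}}{\Omega}=0$ for every $t$, and in particular the asserted constraint on the initial datum $g_0$.

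Finally I would integrate the reduced equation. With the obstruction term gone, the decomposition reads $\tmop{Ric}_{g_t}(\Omega)=-\tmop{Ric}_{_J}(\Omega)J$, so the flow becomes $\dot g_t=-\tmop{Ric}_{_J}(\Omega)J-g_t$. The observation making this explicitly solvable is that $\tmop{Ric}_{_J}(\Omega)$, being the Chern curvature form of $(K^{-1}_{X,J},h_\Omega^{\ast})$, depends only on $J$ and $\Omega$ and is therefore constant in $t$. Thus the equation is a linear inhomogeneous ODE $\dot g_t=A-g_t$ with constant $A=-\tmop{Ric}_{_J}(\Omega)J$, whose unique solution with datum $g_0$ is $g_t=A+(g_0-A)e^{-t}$, i.e.\ exactly the claimed formula.

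I expect the only genuinely delicate point to be the bookkeeping in the second paragraph: correctly identifying $g_t\bar{\partial}_{_{T_{X,J}}}\nabla_{g_t}\log\frac{dV_{g_t}}{\Omega}$ as the $J$-anti-invariant summand and invoking that a tensor lying in both the $J$-invariant and $J$-anti-invariant parts of $S^2_{\R}\,T^{\ast}_X$ must be zero. Everything else is formal manipulation of (\ref{cx-dec-Ric}) together with an elementary constant-coefficient ODE.
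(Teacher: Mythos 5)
Your argument is correct and is essentially the paper's own proof, just written out in more detail: the paper likewise observes that $J$-invariance of $g_t$ forces $J$-invariance of $\dot g_t$, invokes the decomposition (\ref{cx-dec-Ric}) to kill the $J$-anti-invariant term $g_t\,\bar{\partial}_{_{T_{X,J}}}\nabla_{g_t}\log\frac{dV_{g_t}}{\Omega}$, and then integrates the resulting constant-coefficient linear ODE $\dot g_t=-\tmop{Ric}_{_J}(\Omega)J-g_t$. Your explicit justification that a tensor which is both $J$-invariant and $J$-anti-invariant vanishes, and that $\tmop{Ric}_{_J}(\Omega)$ is $t$-independent, fills in exactly the steps the paper leaves implicit.
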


\begin{proof}
  The fact that $g_t$ is $J$-invariant implies that $\dot{g}_t$ is also
  $J$-invariant. Then the decomposition formula (\ref{cx-dec-Ric}) combined
  with the evolution equation of $g_t$ provides
  \begin{eqnarray*}
    \dot{g}_t \;\; = \;\; -\;\, \tmop{Ric}_{_J} (\Omega) J\;\, -\;\, g_t\;,
  \end{eqnarray*}
which implies the required conclusion.
\end{proof}
From the previous lemmas we deduce directly the following corollary.
\begin{corollary}
  Let $g_0 \in \mathcal{S}_{_{^{\Omega, J}}}$ with $g J \in 2 \pi c_1 (X)$ be
  an initial data for the $\Omega$-SKRF such that the complex structure stays
  constant along the flow. Then $g_0$ is a $J$-invariant KRS and $g_t \equiv
  g_0 = - \tmop{Ric}_{_J} (\Omega) J$.
\end{corollary}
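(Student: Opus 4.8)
The plan is to observe that the hypothesis forces $(g_t)_t$ to fall exactly under Lemma \ref{baby-SKRF}, and then to collapse the resulting exponential formula by means of the first lemma of this section. First I would use that the $\Omega$-$\tmop{SKRF}$ is by definition a flow of K\"ahler structures $(J_t, g_t)$; since the complex structure is assumed to stay constant, $J_t \equiv J$, each $g_t$ is a $J$-invariant K\"ahler metric and its evolution law reduces to $\dot g_t = \tmop{Ric}_{g_t}(\Omega) - g_t$ (the first equation of (\ref{Rm-SKRF-eq})). This is precisely the setting of Lemma \ref{baby-SKRF}. Equivalently, the constancy $\dot J_t = 0$ read off the second equation of (\ref{VSKRF-eq}) already gives $\bar{\partial}_{_{T_{X, J}}} \nabla_{g_t} \log \frac{dV_{g_t}}{\Omega} = 0$ for every $t$.

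Applying Lemma \ref{baby-SKRF} delivers two things simultaneously: the explicit formula
$$
g_t \;\;=\;\; -\;\, \tmop{Ric}_{_J}(\Omega) J \;\,+\;\, \big(g_0 \;\,+\;\, \tmop{Ric}_{_J}(\Omega) J\big)\, e^{-t}\;,
$$
and the identity $\bar{\partial}_{_{T_{X, J}}} \nabla_{g_0} \log \frac{dV_{g_0}}{\Omega} = 0$ at the initial time. By the complex decomposition (\ref{cx-dec-Ric}) the vanishing of this $\bar{\partial}$-term is exactly the statement that $\tmop{Ric}_{g_0}(\Omega)$ is $J$-invariant. Since by assumption $g_0 \in \mathcal{S}_{_{^{\Omega, J}}}$ and $\omega_0 := g_0 J \in 2 \pi c_1(X)$, the first lemma of this section then applies and yields that $g_0$ is a $J$-invariant KRS with $\omega_0 = \tmop{Ric}_{_J}(\Omega)$, i.e. $g_0 = -\,\tmop{Ric}_{_J}(\Omega) J$. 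Substituting this into the displayed formula makes the coefficient $g_0 + \tmop{Ric}_{_J}(\Omega) J$ vanish, so that $g_t \equiv -\,\tmop{Ric}_{_J}(\Omega) J = g_0$ for all $t$, which is the assertion.

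I do not expect a genuine analytic obstacle here, since the argument is merely a concatenation of the two preceding lemmas; the only point needing care is the bookkeeping that matches the $\bar{\partial}$-vanishing produced by Lemma \ref{baby-SKRF} with the $J$-invariance hypothesis required by the first lemma, together with the recognition that it is the cohomological constraint $\omega_0 \in 2 \pi c_1(X)$ (not merely the cohomology class of $\tmop{Ric}_{_J}(\Omega)$) that upgrades the $J$-invariance of $\tmop{Ric}_{g_0}(\Omega)$ to the exact equality $\omega_0 = \tmop{Ric}_{_J}(\Omega)$. That cohomological step is the substantive ingredient, but it has already been carried out inside the first lemma via the $i\, \partial_{_J} \bar{\partial}_{_J} u$ potential and an integration by parts, so it is invoked here without reproof.
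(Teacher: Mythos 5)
Your argument is correct and follows essentially the same route as the paper, which deduces the corollary directly from the two preceding lemmas: the constancy of $J_t$ forces the $\bar{\partial}_{_{T_{X,J}}}$-term to vanish, the first lemma (using $g_0 \in \mathcal{S}_{_{^{\Omega, J}}}$ and $g_0 J \in 2\pi c_1(X)$) upgrades this to $g_0 = -\,\tmop{Ric}_{_J}(\Omega)J$, and Lemma \ref{baby-SKRF}'s explicit formula then collapses to $g_t \equiv g_0$. No gaps.
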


The last statement in the theorem \ref{Main-Teo} follows directly from this
corollary.

\section{On the smooth convergence of the Soliton-K\"ahler-Ricci flow}

We show now the convergence statement in theorem 1.
According to the convergence result for the $\Omega$-SRF obtained in \cite{Pal2} we
just need to show the smooth convergence of the complex structures. We
consider the differential system
\begin{eqnarray*}
  \label{cv-Cx}  \left\{ \begin{array}{l}
    2\, \dot{J}_t \;\;=\;\; \left[ J_t\,, \dot{g}_t^{\ast} \right]\;,\\
    \\
    2\, \ddot{g}_t \;\;=\;\; -\;\, \Delta^{^{_{_{\Omega}}}}_{g_t}  \dot{g}_t \;\,-\;\, 2\, \dot{g}_t\;,
  \end{array} \right.
\end{eqnarray*}
along the $\Omega$-SRF (see \cite{Pal2}) and we remind the uniform estimates 
\begin{eqnarray*}
| \dot{g}_t
|_{g_t} &\leqslant& | \dot{g}_0 |_{C^0 (X), g_0} \,e^{- \,\delta\, t / 2}\;,
\\
\\
e^{- C} g_0 &\leqslant& g_t \;\;\leqslant\;\; e^C g_0\;,
\end{eqnarray*}
proved in \cite{Pal2}. We consider also the
estimate of the norm
\begin{eqnarray*}
  | \dot{J}_t |_{g_t} \;\; \leqslant \;\; \sqrt{2\, n}\, |J_t |_{g_t} \,| \dot{g}^{\ast}_t
  |_{g_t}\;,
\end{eqnarray*}
where the constant $\sqrt{2 \,n}$ comes from the equivalence between the
Riemannian norm and the operator norm on the space of endomorphisms of $T_{X,
x}$. We observe now the trivial identities
\begin{eqnarray*}
  |J_t |^2_{g_t} \;\; = \;\; \tmop{Tr}_{_{\mathbbm{R}}} \Big[ J_t \,(J_t)_{g_t}^T
  \Big] \;\;=\;\; -\;\, \tmop{Tr}_{_{\mathbbm{R}}} J^2_t \;\,=\;\, \tmop{Tr}_{_{\mathbbm{R}}}
  \mathbbm{I}_{T_X} \;\;=\;\; 2\, n\; .
\end{eqnarray*}
We deduce the exponential estimate of the variation of the complex structure
\begin{eqnarray*}
  | \dot{J}_t |_{g_t} \;\; \leqslant \;\; 2\, n\,| \dot{g}_0 |_{C^0 (X), g_0} \,e^{-\,
  \delta\, t / 2}\;,
\end{eqnarray*}
and thus the convergence of the integral
\begin{eqnarray*}
  \int^{+ \infty}_0 | \dot{J}_t |_{g_0} d t \;\; < \;\; +\;\, \infty\; .
\end{eqnarray*}
In its turn this shows the existence of the integral
\begin{eqnarray*}
  J_{\infty} \;\; \assign \;\; J_0 \;\,+\;\, \int^{+ \infty}_0 \dot{J}_t \,d t\;,
\end{eqnarray*}
thanks to Bochner's theorem. Moreover hold the exponential estimate
\begin{eqnarray*}
  |J_{\infty} \;-\; J_t |_{g_0} \;\; \leqslant \;\; \int^{+ \infty}_t | \dot{J}_s
  |_{g_0} \,d s \;\;\leqslant\;\; C' \,e^{- \,\delta\, t / 2}\; .
\end{eqnarray*}
On the other hand the K\"ahler identity $\nabla_{g_t} J_t \equiv 0$ implies the equality
$$
2\,
\nabla^p_{g_t}  \dot{J}_t \;\;=\;\; \left[ J_t, \nabla^p_{g_t}  \dot{g}_t^{\ast}
\right]\;,
$$ for all $p \in \mathbbm{N}$. We deduce the estimates
\begin{eqnarray*}
  | \nabla^p_{g_t}  \dot{J}_t |_{g_t} \;\; \leqslant \;\; \sqrt{2\, n}\, |J_t |_{g_t} \,|
  \nabla^p_{g_t}  \dot{g}^{\ast}_t |_{g_t} \;\;\leqslant\;\; 2\, n\, C_p \,e^{-\,
  \varepsilon_p t}\,,
\end{eqnarray*}
thanks to the exponential decay of the evolving Riemannian metrics proved in \cite{Pal2}. The fact that the flow of Riemannian metrics $(g_t)_{t \geqslant 0}$ is
uniformly bounded in time for any $C^p (X)$-norm implies the uniform estimate
\begin{eqnarray*}
  | \nabla^p_{g_0}  \dot{J}_t |_{g_0} \;\;\leqslant \;\; C'_p\, e^{-\, \varepsilon_p t}\,
  .
\end{eqnarray*}
We infer the convergence of the integral
\begin{eqnarray*}
  \int^{+ \infty}_0 | \nabla^p_{g_0}  \dot{J}_t |_{g_0} d t \;\; < \;\; +\;\, \infty\;,
\end{eqnarray*}
and thus the existence of the integral
\begin{eqnarray*}
  I_p \;\; \assign \;\; \nabla^p_{g_0} J_0 \;\,+\;\, \int^{+ \infty}_0 \nabla^p_{g_0} 
  \dot{J}_t \,d t \;.
\end{eqnarray*}
We deduce the exponential estimate
\begin{eqnarray*}
  |I_p \;-\; \nabla^p_{g_0} J_t |_{g_0} \;\; \leqslant \;\; \int^{+ \infty}_t |
  \nabla^p_{g_0}  \dot{J}_s |_{g_0} d s \;\;\leqslant\;\; C_p' \,e^{-\, \varepsilon_p t} \;.
\end{eqnarray*}
A basic calculus fact combined with an induction on $p$ implies $I_p =
\nabla^p_{g_0} J_{\infty}$. We deduce that $(J_{\infty}, g_{\infty})$ is a
K\"ahler structure. Then the convergence result in \cite{Pal2} implies that
$g_{\infty}$ is a $J_{\infty}$-invariant KRS.

\section{Appendix. Basic differential identities}
The results explained in this appendix are well known. We include them here for readers convenience.
\begin{lemma}
  \label{t-derLIE}Let $M$ be a differentiable manifold and let
$$
(\xi_t)_{t \geqslant 0} \;\;\subset \;\;C^{\infty} (M, T_M)\,,\quad (\alpha_t)_{t
     \geqslant 0}\;\; \subset\;\; C^{\infty} \left( M, (T^{\ast}_M)^{\otimes p}
     \otimes T_M^{\otimes r} \right)\,, 
$$
be smooth families and let $(\Phi_t)_{t \geqslant 0}$ be the flow of
  diffeomorphisms induced by the family $(\xi_t)_{t \geqslant 0}$ , i.e
$$
 \frac{d}{dt} \hspace{0.25em} \Phi_t \;\;=\;\; \xi_t \circ \Phi_t\,,\quad \Phi_0 \;\;=\;\;
     \tmop{Id}_M \;. 
$$
  Then hold the derivation formula
$$
\frac{d}{dt} \hspace{0.25em} \big(\Phi_t^{\ast} \alpha_t\big)\;\; =\;\; \Phi_t^{\ast} 
     \left( \frac{d}{dt} \hspace{0.25em} \alpha_t \;\,+\;\, L_{\xi_t} \alpha_t \right)\;,
$$
\end{lemma}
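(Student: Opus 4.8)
The plan is to treat $\Phi_t^{\ast}\alpha_t$ as carrying two independent time dependencies, that of the pullback operator $\Phi_t^{\ast}$ and that of the tensor $\alpha_t$, and to separate them by a Leibniz-type splitting. Concretely, fix a time $t_0$ and introduce the two-parameter object $F(s, t) \assign \Phi_s^{\ast} \alpha_t$, viewed as a curve in the fixed linear space of smooth $(T^{\ast}_M)^{\otimes p} \otimes T_M^{\otimes r}$-sections. Since $t \mapsto F(t, t)$ is the quantity of interest, the chain rule gives $\frac{d}{dt}\big|_{t_0} F(t, t) = \partial_s F(t_0, t_0) + \partial_t F(t_0, t_0)$, so the whole problem reduces to computing each partial derivative separately. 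The second partial is immediate: because $\Phi_{t_0}$ is held fixed and pullback is linear, $\partial_t F(t_0, t_0) = \Phi_{t_0}^{\ast}\big(\frac{d}{dt}\alpha_t\big|_{t_0}\big)$.

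The first partial is where the content lies, and this is the step I expect to be the main obstacle, because the generating field $\xi_t$ is non-autonomous. Here I would freeze the tensor at $\alpha_{t_0}$ and establish the identity $\frac{d}{ds}\big(\Phi_s^{\ast} \alpha\big) = \Phi_s^{\ast}\big(L_{\xi_s} \alpha\big)$ for any fixed tensor $\alpha$. The way to absorb the time dependence of $\xi_t$ is to pass to the two-parameter flow $\Phi_{t, s}$ (taking time $s$ to time $t$), which satisfies the cocycle identity $\Phi_{s + h, 0} = \Phi_{s + h, s} \circ \Phi_{s, 0}$ and reduces to $\Phi_s = \Phi_{s, 0}$. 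Differentiating and pulling the fixed factor out front yields
$$
\frac{d}{ds} \Phi_s^{\ast} \alpha \;\;=\;\; \Phi_s^{\ast}\, \frac{d}{dh}\Big|_{h = 0} (\Phi_{s + h, s})^{\ast} \alpha\;.
$$
Since $\Phi_{s + h, s}$ agrees to first order in $h$ with the flow of the frozen autonomous field $\xi_s$, the inner derivative is exactly $L_{\xi_s} \alpha$ by the classical autonomous definition of the Lie derivative. Evaluating at $s = t_0$ with $\alpha = \alpha_{t_0}$ gives $\partial_s F(t_0, t_0) = \Phi_{t_0}^{\ast}\big(L_{\xi_{t_0}} \alpha_{t_0}\big)$.

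Adding the two partials produces $\frac{d}{dt}\big|_{t_0}\big(\Phi_t^{\ast} \alpha_t\big) = \Phi_{t_0}^{\ast}\big(\frac{d}{dt}\alpha_t\big|_{t_0} + L_{\xi_{t_0}} \alpha_{t_0}\big)$, and since $t_0$ is arbitrary this is the asserted formula. The only genuinely delicate point is the reduction to the instantaneous autonomous Lie derivative, which follows from the cocycle property of the non-autonomous flow together with the smooth dependence of $\Phi_{t, s}$ on both parameters; all remaining manipulations are linear and routine.
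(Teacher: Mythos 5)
Your proof is correct and follows essentially the same route as the paper's: a Leibniz splitting of the two time dependencies, followed by reduction of the pullback derivative to the instantaneous autonomous Lie derivative via the relative two-parameter flow --- the paper's $\Phi^t_s := \Phi_t^{-1}\circ\Phi_{t+s}$ plays exactly the role of your $\Phi_{s+h,s}$, up to which side the fixed factor is composed on (the paper's choice forces an extra appeal to naturality, $L_{\Phi_t^{\ast}\xi_t}\Phi_t^{\ast}\alpha = \Phi_t^{\ast}L_{\xi_t}\alpha$, which yours avoids). The only other difference is that the paper justifies by an explicit coordinate computation the fact you quote as classical, namely that $\frac{d}{dh}\big|_{h=0}\Psi_h^{\ast}\alpha$ depends only on the first-order jet $\frac{d}{dh}\big|_{h=0}\Psi_h$ of the flow at the identity.
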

\begin{proof}
  We prove first the particular case
  \begin{equation}
    \label{arb-flw-LIE}  \frac{d}{dt} \vphantom{dt}_{|t = 0} \hspace{0.25em}
    \big(\Phi_t^{\ast} \alpha\big)\;\; =\;\; L_{\xi_0} \alpha \hspace{0.25em},
  \end{equation}
  where $\alpha$ is $t$-independent. For this purpose we consider the
  $1$-parameter subgroup of diffeomorphisms $(\Psi_t)_{t \geqslant 0}$ induced by
  $\xi_0$, i.e
$$
 \frac{d}{dt} \hspace{0.25em} \Psi_t \;\;=\;\;
     \xi_0 \circ \Psi_t \;,\quad \Psi_0 \;\;=\;\; \tmop{Id}_M \; . 
$$
  Let $\hat{\Psi} : \mathbbm{R}_{\geqslant 0} \times M \longrightarrow M$ given by
  $\hat{\Psi} (t, x) = \Psi^{- 1}_t (x)$ and observe the equalities
  \begin{equation}
    \label{inv-flw}  \frac{d}{dt} \vphantom{dt}_{|t = 0} \hspace{0.25em}
    \Psi_t^{- 1} \;\;=\;\;- \;\, \xi_0
    \;\;=\;\; \frac{d}{dt} \vphantom{dt}_{|t = 0}
    \hspace{0.25em} \Phi_t^{- 1}\,,
  \end{equation}
  We will note by $\partial$ the partial derivatives of the coefficients of the
  tensors with respect to a trivialization of the tangent bundle over an open
  set $U \subset M$. Let $v \in T^{\otimes p}_{M, x}$. Then
  \begin{eqnarray*}
    (L_{\xi_0} \alpha) \cdot v & = & \left( \frac{d}{dt} \vphantom{dt}_{|t =
    0} \hspace{0.25em} \Psi_t^{\ast} \alpha \right) \cdot v\\
    &  & \\
    & = & \frac{d}{dt} \vphantom{dt}_{|t = 0} \hspace{0.25em} \Big[ (d
    \Psi_t^{- 1})^{\otimes r} \cdot (\alpha \circ \Psi_t) \Big] \cdot (d
    \Psi_t)^{\otimes p} \cdot v\\
    &  & \\
    & = & \left[ \frac{d}{dt} \vphantom{dt}_{|t = 0} \hspace{0.25em}
    (\partial_x \hat{\Psi})^{\otimes r} (t, \Psi_t (x)) \right] \cdot \alpha
    \cdot v\\
    &  & \\
    & + & \frac{d}{dt} \vphantom{dt}_{|t = 0} \hspace{0.25em} \Big[ (\alpha
    \circ \Psi_t) \cdot (d \Psi_t)^{\otimes p} \cdot v \Big]\\
    &  & \\
    & = & (\partial_t \,\partial_x \hat{\Psi})^{\otimes r} (0, x) \cdot \alpha
    \cdot v \;\,+\;\, \left( (\partial_x^2 \,
    \hat{\Psi})^{\otimes r} (0, x) \cdot \xi^{\otimes r}_0 (x) \right) \cdot
    \alpha \cdot v\\
    &  & \\
    & + & \left( \frac{d}{dt} \vphantom{dt}_{|t = 0} \hspace{0.25em} \alpha
    (\Psi_t (x)) \right) \cdot v \;\,+\;\, \alpha (x)
    \cdot (\partial_t\, \partial_x \Psi)^{\otimes p} (0, x) \cdot v\\
    &  & \\
    & = & - \;\,(\partial_x \,\xi_0)^{\otimes r} (x) \cdot \alpha
    \cdot v\\
    &  & \\
    & + & \left( \partial_x \,\alpha \,(x) \cdot v \right) \cdot \xi_0 (x)
    \;\,+\;\, \alpha (x) \cdot (\partial_x\,
    \xi_0)^{\otimes p} (x) \cdot v \hspace{0.25em},
  \end{eqnarray*}
  since the map
  \[ (\partial_x^2  \,\hat{\Psi})^{\otimes r} (0, x) : S^2 T_U^{\otimes r}
     \longrightarrow T_U^{\otimes r} \hspace{0.25em}, \]
  is zero. Observe in fact the identity
$$
\partial_x \Psi^{- 1}_0 \;\;=\;\; \tmop{Id}_{T_U}\;.
$$
Moreover the same computation and conclusion work for $\Phi_t$ thanks to
  (\ref{inv-flw}). We infer the identity (\ref{arb-flw-LIE}). We prove now the
  general case. We expand the time derivative
  \begin{eqnarray*}
    \frac{d}{dt} \hspace{0.25em} \big(\Phi_t^{\ast} \alpha_t\big) & = & \frac{d}{ds}
    \vphantom{ds}_{|s = 0} \hspace{0.25em} \Phi_{t + s}^{\ast} \,\alpha_{t +
    s}\\
    &  & \\
    & = & \Phi_t^{\ast} \left( \frac{d}{dt} \hspace{0.25em} \alpha_t \right)
    \;\,+\;\,\frac{d}{ds} \vphantom{ds}_{|s = 0}
    \hspace{0.25em} \Phi_{t + s}^{\ast} \,\alpha_t\\
    &  & \\
    & = & \Phi_t^{\ast} \left( \frac{d}{dt} \hspace{0.25em} \alpha_t \right)
    \;\,+\;\, \frac{d}{ds} \vphantom{ds}_{|s = 0}
    \hspace{0.25em} (\Phi_t^{- 1} \Phi_{t + s})^{\ast} \,\Phi_t^{\ast} \,\alpha_t
    \hspace{0.25em} .
  \end{eqnarray*}
  We set $\Phi^t_s : = \Phi_t^{- 1} \Phi_{t + s}$ and we observe the equalities
\begin{eqnarray*}
\frac{d}{ds} \vphantom{ds}_{|s = 0} \hspace{0.25em} \Phi^t_s
&=&
 d\, \Phi_t^{- 1} \cdot \frac{d}{ds}
     \vphantom{ds}_{|s = 0} \hspace{0.25em} \Phi_{t + s} 
\\
\\
&=&
d\, \Phi_t^{- 1} \cdot (\xi_t \circ \Phi_t) 
\\
\\
&=& \Phi_t^{\ast}\, \xi_t \; . 
\end{eqnarray*}
  Then the identity (\ref{arb-flw-LIE}) applied to the family $(\Phi^t_s)_s$
  implies
\begin{eqnarray*}
\frac{d}{dt} \hspace{0.25em} \big(\Phi_t^{\ast} \alpha_t\big) 
&=&
\Phi_t^{\ast} \left( \frac{d}{dt} \hspace{0.25em}
     \alpha_t \right) \;\,+\;\, L_{\Phi_t^{\ast}
     \xi_t}  \Phi_t^{\ast} \,\alpha_t 
\\
\\
&=& \Phi_t^{\ast} \left( \frac{d}{dt} \hspace{0.25em}
     \alpha_t \;\,+\;\,L_{\xi_t} \alpha_t \right)\;. 
\end{eqnarray*}
\end{proof}

\begin{lemma}
  \label{Lie-CXst}Let $(X, J)$ be an almost complex manifold and let $N_{_J}$
  be the Nijenhhuis tensor. Then for any $\xi \in C^{\infty} (X, T_X)$ hold
  the identity
  \begin{equation}
    \label{Lie-CX-st} L_{\xi} J \;\;=\;\; 2\, J \left( \bar{\partial}_{_{T_{X, J}}}
    \xi \;\,-\;\, \xi\; \neg\; N_{_J} \right) \;.
  \end{equation}
\end{lemma}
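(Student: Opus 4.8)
The plan is to verify the identity fibrewise after complexifying the tangent bundle, exploiting the fact that both sides are real $J$-anti-linear endomorphisms of $T_X$. First I would record the algebraic type of each term. Differentiating $J^2 = -\I_{T_X}$ along $\xi$ gives $(L_\xi J)\,J = -\,J\,(L_\xi J)$, so $L_\xi J$ anticommutes with $J$; the same holds for $\bar{\partial}_{_{T_{X, J}}}\xi$ and for the contraction $\xi\,\neg\,N_{_J}$, since $N_{_J}(J\,\cdot\,,\,\cdot\,) = -\,J\,N_{_J}(\,\cdot\,,\,\cdot\,)$. Because a real endomorphism anticommuting with $J$ is completely determined by its restriction to the $(-i)$-eigenbundle $T_X^{0,1} \subset T_X \otimes \C$, it suffices to check the identity on an arbitrary section $\bar{Z} \in T_X^{0,1}$.

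Next I would compute the left-hand side on such a $\bar{Z}$ from the bracket expression $(L_\xi J)\bar{Z} = [\xi, J\bar{Z}] - J[\xi, \bar{Z}]$. Writing $J\bar{Z} = -\,i\,\bar{Z}$ and decomposing $[\xi,\bar{Z}]$ into its $(1,0)$- and $(0,1)$-parts, the $(0,1)$-contributions cancel (on $T_X^{0,1}$ one has $i + J = 0$) and one is left with $(L_\xi J)\bar{Z} = -\,2i\,\pi^{1,0}[\xi,\bar{Z}]$, where $\pi^{1,0}$ denotes the projection onto $T_X^{1,0}$.

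Then I would split $\xi = \xi^{1,0} + \xi^{0,1}$ with $\xi^{0,1} = \overline{\xi^{1,0}}$. The term $\pi^{1,0}[\xi^{1,0},\bar{Z}]$ is, by definition, (minus) the Cauchy--Riemann operator $\bar{\partial}_{_{T_{X, J}}}\xi$ evaluated at $\bar{Z}$, which in the integrable case reproduces the Dolbeault operator on the holomorphic tangent bundle. The remaining term $\pi^{1,0}[\xi^{0,1},\bar{Z}]$ is a bracket of two sections of $T_X^{0,1}$; by the definition of the Nijenhuis tensor, together with the vanishing $N_{_J}(T_X^{1,0}, T_X^{0,1}) = 0$ (so that only the $\xi^{0,1}$-part contributes), this equals a constant multiple of $(\xi\,\neg\,N_{_J})(\bar{Z})$. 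Collecting the two pieces and using $J = +\,i$ on $T_X^{1,0}$ yields $L_\xi J = 2\,J\big(\bar{\partial}_{_{T_{X, J}}}\xi - \xi\,\neg\,N_{_J}\big)$ on $T_X^{0,1}$, and taking complex conjugates extends it to $T_X^{1,0}$, hence to all of $T_X$.

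I expect the main obstacle to be one of normalization and sign bookkeeping rather than of ideas: one must fix the precise convention for $\bar{\partial}_{_{T_{X, J}}}$ (the factor relating the real operator to the complex Dolbeault operator) and, above all, the normalization of $N_{_J}$, since the factor $2$ and the coefficient in front of $\xi\,\neg\,N_{_J}$ in the statement are dictated by that choice. A clean way to calibrate these is to test both sides in the integrable case $N_{_J} = 0$, where the identity collapses to $L_\xi J = 2\,J\,\bar{\partial}_{_{T_{X, J}}}\xi$, and on a local frame adapted to $J$. An entirely real alternative, avoiding complexification, is to insert a torsion-free connection $\nabla$, use $(L_\xi J)Y = (\nabla_\xi J)Y - \nabla_{JY}\xi + J\nabla_Y\xi$, recognise $2\,J\,\bar{\partial}_{_{T_{X, J}}}\xi$ as the $J$-anti-linear part $\tfrac12(\nabla_Y\xi + J\nabla_{JY}\xi)$, and identify the residual $\nabla_\xi J$ with the Nijenhuis contraction through the classical torsion-free formula for $N_{_J}$.
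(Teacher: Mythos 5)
Your proposal is correct and follows essentially the same route as the paper: both rest on decomposing the bracket identity $(L_{\xi} J)\,\eta = [\xi, J\eta] - J[\xi,\eta]$ into types and matching the resulting pure-type brackets with the expressions $\bar{\partial}_{_{T_{X,J}}}\xi\,(\eta) = [\eta^{0,1},\xi^{1,0}]^{1,0} + [\eta^{1,0},\xi^{0,1}]^{0,1}$ and $N_{_J}(\xi,\eta) = [\xi^{1,0},\eta^{1,0}]^{0,1} + [\xi^{0,1},\eta^{0,1}]^{1,0}$. Your preliminary reduction to $T_X^{0,1}$ via the $J$-anticommutation of all three terms is a small organizational refinement of the paper's type decomposition, and your sign bookkeeping ($(L_{\xi}J)\bar{Z} = -2i\,\pi^{1,0}[\xi,\bar{Z}]$, with $J = i$ on the $(1,0)$ image) checks out against the stated normalizations.
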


\begin{proof}
  Let $\eta \in C^{\infty} (X, T_X)$. Then
  \begin{eqnarray*}
    \hspace{0.25em} \bar{\partial}_{_{T_{X, J}}} \xi \,(\eta) & = &
    [\eta^{0, 1}, \xi^{1, 0}]^{1, 0} \;\,+\;\,
    [\eta^{1, 0}, \xi^{0, 1}]^{0, 1} \;,\\
    &  & \\
    N_{_J} (\xi, \eta) & = & [\xi^{1, 0}, \eta^{1, 0}]^{0, 1} \;\,+\;\, [\xi^{0, 1}, \eta^{0, 1}]^{1, 0} \;.
  \end{eqnarray*}
and the conclusion follows by decomposing in type $(1, 0)$ and $(0, 1)$ the
  identity
  \[ (L_{\xi} J) \,\eta \;\;=\;\; [\xi, J \eta] \;\,-\;\, J [\xi, \eta]\; . \]
\end{proof}

We observe now that if $(X, J, \omega)$ is a K\"ahler manifold and $u \in C^{\infty} (X,
\mathbbm{R})$, then hold the identities
\[ \nabla_{\omega} u \;\neg\; \omega \;\;=\;\; -\;\, (d\, u) \cdot J \;\;=\;\; -\; i\, \partial_{_J} u \;\,+\;\, i\,
   \bar{\partial}_{_J} u\;, \]
and
\begin{equation}
  \label{cx-Lie} L_{\nabla_{\omega} u} \,\omega \;\;=\;\; d \left( \nabla_{\omega} \,u
\;\neg\; \omega \right) \;\,=\;\, 2\, i\, \partial_{_J} \bar{\partial}_{_J} u \;.
\end{equation}
\begin{lemma}
  Let $(X, J, g)$ be a K\"ahler manifold and let $u \in C^{\infty} (X,
  \mathbbm{R})$. Then hold the decomposition formula
  \begin{equation}
    \label{cx-dec-Hess} \nabla_g\, d\,u \;\;=\;\; i\, \partial_{_J} \bar{\partial}_{_J}
    u \left( \cdot, J \cdot \right) \;\,+\;\, g \left( \cdot,
    \bar{\partial}_{_{T_{X, J}}} \nabla_g \,u \,\cdot \right) \;.
  \end{equation}
\end{lemma}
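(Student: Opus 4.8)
The plan is to reduce the identity to a decomposition of the field of endomorphisms $A \assign \nabla_g (\nabla_g u)$, namely the endomorphism sending $Y$ to the covariant derivative of the gradient in the direction $Y$. First I would rewrite the Hessian through the gradient: since $g(\nabla_g u, Y) = d u (Y)$, one gets $\nabla_g d u (X, Y) = g(\nabla_{g} \nabla_g u\, (X), Y) = g(A X, Y)$, and as this bilinear form is symmetric the endomorphism $A$ is $g$-self-adjoint. Writing $A = A^+ + A^-$ with $A^+ \assign \frac{1}{2}(A - J A J)$ and $A^- \assign \frac{1}{2}(A + J A J)$, the part $A^+$ commutes with $J$ while $A^-$ anticommutes with it; since $J$ is $g$-skew ($J^T_g = - J$) both $A^{\pm}$ stay self-adjoint, so $g(\cdot, A^+ \cdot)$ is the $J$-invariant and $g(\cdot, A^- \cdot)$ the $J$-anti-invariant part of $\nabla_g d u$.

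Next I would identify the anti-invariant part with the second summand. Because $\nabla_g$ is torsion free and $\nabla_g J = 0$, a direct expansion of $L_{\nabla_g u} J\,(Y) = [\nabla_g u, J Y] - J [\nabla_g u, Y]$ collapses to $L_{\nabla_g u} J = J A - A J$. Feeding this into the identity (\ref{Lie-CX-st}) of Lemma \ref{Lie-CXst}, which in the integrable K\"ahler case ($N_{_J} = 0$) reads $L_{\nabla_g u} J = 2\, J\, \bar{\partial}_{_{T_{X, J}}} \nabla_g u$, and using $J^{-1} = - J$, yields $\bar{\partial}_{_{T_{X, J}}} \nabla_g u = \frac{1}{2}(A + J A J) = A^-$. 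Hence $g(\cdot, \bar{\partial}_{_{T_{X, J}}} \nabla_g u\, \cdot)$ is precisely the $J$-anti-invariant part of the Hessian.

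It then remains to identify the $J$-invariant part $g(\cdot, A^+ \cdot)$ with $i\, \partial_{_J} \bar{\partial}_{_J} u (\cdot, J \cdot)$. Here I would invoke the already recorded identity (\ref{cx-Lie}), $L_{\nabla_g u} \omega = 2\, i\, \partial_{_J} \bar{\partial}_{_J} u$ (where $\omega = g J$ and $\nabla_\omega u = \nabla_g u$), together with the standard passage from Lie to covariant derivative applied to the parallel form $\omega$: since $\nabla_g \omega = 0$ and $\nabla_g$ is torsion free, $L_{\nabla_g u} \omega (X, Y) = \omega (A X, Y) + \omega (X, A Y) = g((J A + A J) X, Y) = 2\, \omega (A^+ X, Y)$, the last equality because $J A + A J = 2\, J A^+$. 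Comparing with (\ref{cx-Lie}) gives $i\, \partial_{_J} \bar{\partial}_{_J} u = \omega (A^+ \cdot, \cdot) = g(J A^+ \cdot, \cdot)$, and composing the second slot with $J$ together with the $g$-orthogonality of $J$ produces $i\, \partial_{_J} \bar{\partial}_{_J} u (\cdot, J \cdot) = g(\cdot, A^+ \cdot)$. Adding the two identified pieces gives $\nabla_g d u = g(\cdot, A^+ \cdot) + g(\cdot, A^- \cdot)$, which is the claimed decomposition.

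The bilinear bookkeeping is routine; the step I expect to require the most care is the sign and normalization tracking, in particular the factor $2$ and the role of $J^{-1} = - J$ emerging from (\ref{Lie-CX-st}), and the compatibility of the convention $\omega = g J$ throughout, since a mismatch there would swap or flip one of the two summands. As a sanity check I would confirm the placement in local holomorphic coordinates, where $(\nabla_g d u)_{\alpha \bar{\beta}} = \partial_\alpha \partial_{\bar{\beta}} u$ furnishes the full $(1,1)$-part while the $(2,0)\oplus(0,2)$-part carries the holomorphy defect of the gradient $\nabla_g u$.
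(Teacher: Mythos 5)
Your proof is correct, and it reaches the identity by a different mechanism than the paper, even though both ultimately rest on the same two inputs, namely (\ref{cx-Lie}) and (\ref{Lie-CX-st}). The paper never introduces the Hessian endomorphism: it applies the Leibniz rule for Lie derivatives to $g(\eta,\mu)=\omega(\eta,J\mu)$ to obtain the purely tensorial identity $L_{\xi}\,g = L_{\xi}\,\omega\,(\cdot,J\cdot)+\omega\,(\cdot,L_{\xi}\,J\cdot)$, and then substitutes $\xi=\nabla_g u$ together with $L_{\nabla_g u}\,g=2\,\nabla_g\,d\,u$, (\ref{cx-Lie}) and (\ref{Lie-CX-st}); no covariant-derivative computation appears. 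You instead work through the Levi-Civita connection: you set $A\assign\nabla_g\nabla_g u$, split it into the $J$-commuting and $J$-anticommuting parts $A^{\pm}$, and compute $L_{\nabla_g u}J=JA-AJ$ and $L_{\nabla_g u}\omega=\omega(A\cdot,\cdot)+\omega(\cdot,A\cdot)$ from torsion-freeness and $\nabla_g J=0$, $\nabla_g\omega=0$. Your bookkeeping checks out with the paper's convention $\omega=g(J\cdot,\cdot)$: in particular $\bar{\partial}_{_{T_{X,J}}}\nabla_g u=A^-=\frac{1}{2}(A+JAJ)$ and $JA+AJ=2JA^+$ are correct, and the self-adjointness of $A^{\pm}$ is what lets you move them across the metric at the end. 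What your route buys is the extra structural statement that the two summands of (\ref{cx-dec-Hess}) are exactly the $J$-invariant and $J$-anti-invariant parts of the Hessian (equivalently the $(1,1)$ and $(2,0)+(0,2)$ parts, as your coordinate sanity check indicates); what the paper's route buys is brevity and the fact that the reduction to (\ref{cx-Lie}) and (\ref{Lie-CX-st}) is done entirely at the level of Lie derivatives, without invoking the connection.
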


\begin{proof}
  Let $\xi \hspace{0.25em}, \eta \hspace{0.25em}, \mu \in C^{\infty} (X,
  T_X)$. By definition of Lie derivative hold the identity
  \[ \xi \,.\, g (\eta, \mu) \;\;=\;\;
 (L_{\xi}\, g) (\eta, \mu) \;\,+\;\,
     g (L_{\xi} \,\eta, \mu) \;\,+\;\, g (\eta, L_{\xi}\,
     \mu) \; . \]
  Let $\omega : = g (J \cdot, \cdot)$ be the induced K\"ahler form. Then by
  using again the definition of Lie derivative we infer the equalities
  \begin{eqnarray*}
\xi \,.\, g (\eta, \mu) & = & \xi
    \,.\, \omega (\eta, J \mu)\\
    &  & \\
    & = & (L_{\xi} \,\omega) (\eta, J \mu) \;\,+\;\,
    \omega (L_{\xi} \,\eta, J \mu) \;\,+\;\, \omega
    (\eta, L_{\xi} (J \mu))\\
    &  & \\
    & = & (L_{\xi} \,\omega) (\eta, J \mu) \;\,+\;\, g
    (L_{\xi} \,\eta, \mu) \hspace{0.75em} + \hspace{0.75em} \omega (\eta,
    (L_{\xi} \,J) \mu) \;\,+\;\, g (\eta, L_{\xi} \,\mu)\; .
  \end{eqnarray*}
  We deduce the identity
$$
 L_{\xi} \,g \;\;=\;\; L_{\xi} \,\omega \,(\cdot, J
     \cdot) \;\,+\;\, \omega \,(\cdot, L_{\xi}\, J \cdot)
     \; . 
$$
  We apply this identity to the vector field $\xi : = \nabla_g \,u$. Then the conclusion follows
  from the identity
$$
 L_{\nabla_g u} \hspace{0.25em} g \;\;=\;\; 2\, \nabla_g\, d\,u \hspace{0.25em}, 
$$
  combined with (\ref{cx-Lie}), and (\ref{Lie-CX-st}). 
\end{proof}

\vspace{1cm}
\noindent
Nefton Pali
\\
Universit\'{e} Paris Sud, D\'epartement de Math\'ematiques 
\\
B\^{a}timent 425 F91405 Orsay, France
\\
E-mail: \textit{nefton.pali@math.u-psud.fr}

\end{document}